\documentclass{article}

\usepackage{amsmath,amssymb,amsthm, amscd, verbatim, setspace}
\usepackage{graphicx} 
\usepackage{url, hyperref}
\usepackage[section]{placeins}
\usepackage{tabu}
\tabulinesep=0.8mm
\usepackage{enumerate}

\theoremstyle{plain}
\newtheorem{thm}{Theorem} 
\numberwithin{thm}{section}
\newtheorem{lem}[thm]{Lemma}
\newtheorem{cor}[thm]{Corollary} 
\newtheorem{prop}[thm]{Proposition} 
 
\newtheorem{conj}[thm]{Conjecture}

\theoremstyle{definition}
\newtheorem{defn}[thm]{Definition}

\theoremstyle{remark}
\newtheorem{rem}[thm]{Remark}

\newcommand{\codim}[1]{\text{codim}(#1)}
\DeclareMathOperator{\@span}{span}

\title{Dense binary $PG(t-1,2)$-free matroids have critical number $t-1$ or $t$}
\author{Jonathan Tidor\footnote{Department of Mathematics, MIT; \nolinkurl{jtidor@mit.edu}}}
\date{}

\begin{document}

\maketitle

\begin{abstract}
The critical threshold of a (simple binary) matroid $N$ is the infimum over all $\rho$ such that any $N$-free matroid $M$ with $|M|>\rho2^{r(M)}$ has bounded critical number. In this paper, we resolve two conjectures of Geelen and Nelson, showing that the critical threshold of the projective geometry $PG(t-1,2)$ is $1-3\cdot2^{-t}$. We do so by proving the following stronger statement: if $M$ is $PG(t-1,2)$-free with $|M|>(1-3\cdot2^{-t})2^{r(M)}$, then the critical number of $M$ is $t-1$ or $t$. Together with earlier results of Geelen and Nelson \cite{gn} and Govaerts and Storme \cite{gs}, this completes the classification of dense $PG(t-1,2)$-free matroids.
\end{abstract}

\section{Introduction}

In this paper, the term \textit{matroid} refers to a simple binary matroid. We represent such a matroid $M$ as a set of \textit{vectors}, $E(M)$, that is a full-rank subset of  $\mathbb{F}_2^{r(M)}$ such that $0\not\in E(M)$ where $r(M)$ is the \textit{rank} of the matroid. The \textit{cardinality} of a matroid $M$, denoted $|M|$, is simply the cardinality of $E(M)$ and the \textit{critical number}, $\chi(M)$, is the smallest $k$ such that there exists a codimension-$k$ linear subspace of $\mathbb{F}_2^{r(M)}$ that is disjoint from $E(M)$. A matroid $M$ \textit{contains} another matroid $N$ if there is a linear injection $\iota\colon\mathbb{F}_2^{r(N)}\hookrightarrow\mathbb{F}_2^{r(M)}$ such that $\iota(E(N))\subset E(M)$. The \textit{projective geometry} $PG(t-1,2)$ is regarded as a matroid $M$ with $r(M)=t$ and $E(M)=\mathbb{F}_2^t\setminus\{0\}$. (Note that despite the name `projective geometry', in this paper we solely consider matroids as subsets of the vector space $\mathbb F_2^{r(M)}$.)

Our goal is to understand the relationship between density and critical number for matroids avoiding a fixed matroid $N$. In particular, we are interested in the \textit{critical threshold} of $N$: the infimum over all $\rho$ such that any $N$-free matroid $M$ with $|M|>\rho2^{r(M)}$ has bounded critical number. In this paper we determine the critical threshold of the projective geometries $PG(t-1,2)$ and characterize the critical number of $PG(t-1,2)$-free matroids with density above that threshold. This question was posed by Geelen and Nelson \cite{gn} as a generalization of the following problem in graph theory.

An example of Hajnal shows that there exist triangle-free graphs $G$ with arbitrarily large chromatic number and minimum degree arbitrarily close to $\frac13|V(G)|$ (see \cite{es}). Based on this construction, Erd\H{o}s and Simonovits \cite{es} asked whether triangle-free graphs $G$ with $\delta(G)>\frac13|V(G)|$ have bounded chromatic number. This question was solved by Thomassen \cite{t}, showing that the \textit{chromatic threshold} of the triangle is $\frac13$.

In fact, much more is known. It is easy to see that no triangle-free graphs $G$ exist with $\delta(G)>\frac12|V(G)|$. Further bounds have been derived \cite{a,j,bt} and are listed below. All of these bounds are tight.

\begin{center}
\begin{tabu}{c|c|c|c|c}
$\delta(G)/|V(G)|$ & $>\frac12$ & $>\frac25$ & $>\frac{10}{29}$ & $>\frac13$ \\\hline
$\chi(G)$ & no graphs & $\leq 2$ & $\leq 3$ & $\leq 4$
\end{tabu}
\end{center}

Goddard and Lyle \cite{gl} showed that the chromatic threshold of the complete graph $K_r$ is $\frac{2r-5}{2r-3}$. Furthermore, their result allowed them to generalize the above table to $K_r$-free graphs. Finally, Allen et al. \cite{abgkm} compute the chromatic threshold of an arbitrary graph in terms of its chromatic number. Namely, for $c\geq 3$, the chromatic threshold of a graph $H$ with $\chi(H)=c$ is one of $\left\{\frac{c-3}{c-2},\frac{2c-5}{2c-3},\frac{c-2}{c-1}\right\}$.

The analogous question for triangle-free matroids was first investigated by Davydov and Tombak in the context of linear binary codes \cite{dt}. They essentially showed that the critical threshold of the triangle is at most $\frac14$. Geelen and Nelson \cite{gn} proved a lower bound on the critical threshold of all matroids, which resolves the question for the triangle.

The following is known about the relationship between density and critical number for $PG(t-1,2)$-free matroids.

\begin{thm}[\cite{gn}]
For $t\geq 2$ and $\epsilon>0$, there exist $PG(t-1,2)$-free matroids $M$ with $|M|>(1-3\cdot2^{-t}-\epsilon)2^{r(M)}$ and arbitrarily large $\chi(M)$.
\end{thm}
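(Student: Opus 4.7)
We aim to construct, for each $t \geq 2$ and $\epsilon > 0$, a sequence of matroids $M_1, M_2, \ldots$ with $r(M_i) \to \infty$, $|M_i|/2^{r(M_i)} > 1 - 3 \cdot 2^{-t} - \epsilon$, each $M_i$ being $PG(t-1,2)$-free, and $\chi(M_i) \to \infty$. Writing $E(M_i) = V_i \setminus (T_i \cup \{0\})$ for $V_i = \mathbb{F}_2^{r_i}$, the $PG(t-1,2)$-freeness of $M_i$ is equivalent to the condition that $T_i$ meet every $t$-dimensional subspace of $V_i$ in a nonzero element, while $\chi(M_i) = r_i - d_i$ where $d_i$ is the largest dimension of a linear subspace of $V_i$ contained in $T_i \cup \{0\}$. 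The task is therefore to construct $T_i$ with $|T_i| \leq (3 \cdot 2^{-t} + \epsilon) 2^{r_i}$ covering every $t$-dimensional subspace but containing no large linear subspace.

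The first step is a ``layered preimage'' construction. Pick a surjective linear map $\phi \colon V \to \mathbb{F}_2^t$ and a $2$-dimensional subspace $L \subset \mathbb{F}_2^t$ with $L \setminus \{0\} = \{a, b, a+b\}$, and set $T_0 = \phi^{-1}(\{a, b, a+b\})$, a union of three cosets of $\ker \phi$ of total size $3 \cdot 2^{r-t}$. Every $t$-dimensional subspace $Y \subset V$ with $\phi(Y) = \mathbb{F}_2^t$ meets $T_0$, so the ``outer'' subspaces are handled. The $t$-dimensional subspaces $Y \subset \ker \phi$, which exist when $r \geq 2t$, are missed, so take $T = T_0 \cup T'$ where $T' \subset \ker \phi$ is constructed recursively by applying the same procedure inside $\ker \phi \cong \mathbb{F}_2^{r-t}$. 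The resulting geometric sum gives $|T| \leq 3 \cdot 2^r / (2^t - 1)$, so the density is within $O(4^{-t})$ of the threshold; this suffices when $\epsilon$ is of at least this order, and smaller $\epsilon$ would require sharpening, e.g.~by using multiple independent surjections or a more economical recursion.

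The main obstacle is arranging the subspace-containment property needed for $\chi(M_i) \to \infty$. The set $\phi^{-1}(L) = T_0 \cup \ker \phi$ is itself a linear subspace of dimension $r - t + 2$, so $T_0 \cup \{0\}$ already contains $2$-dimensional subspaces of $V$ (for instance any lift of $L$ under a section of $\phi$), and the recursive pieces compound rather than alleviate this issue. To force $\chi(M_i) \to \infty$ I would replace each of the three cosets by a carefully thinned set $c_i + S_i$, where $c_i$ is a fixed preimage of $a, b, a+b$ respectively and $S_1, S_2, S_3 \subset \ker \phi$ are generic subsets. For any linear subspace $S \subset T \cup \{0\}$ the image $\phi(S)$ lies in $L$, hence $\dim \phi(S) \leq 2$ and $\dim S \leq 2 + \dim(S \cap \ker \phi)$; the recursive hypothesis then bounds the second term, and iterating yields $d_i \leq 2 \cdot (\text{recursion depth}) = O(r_i / t)$, so $\chi(M_i) = \Omega(r_i)$ for $t \geq 3$.

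The key technical challenge, and the main obstacle, is choosing the subsets $S_1, S_2, S_3$ so that no unexpected ``diagonal'' linear subspace spans multiple cosets, while simultaneously preserving both the covering property (for $PG(t-1,2)$-freeness) and the density bound. I would attempt a probabilistic argument with careful weighting, or alternatively a deterministic construction based on generic hyperplane sections of $\ker \phi$. The case $t = 2$, where the recursion degenerates and the bound $\chi(M_i) \geq r_i (1 - 2/t)$ becomes vacuous, likely requires a separate construction in the spirit of Davydov--Tombak. With these pieces in place, iterating to arbitrary rank $r$ produces the desired family of matroids satisfying all three required properties.
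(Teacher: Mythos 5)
The core gap is in the covering argument for $PG(t-1,2)$-freeness, and it is fatal to the construction as written. Your set $T_0=\phi^{-1}(L\setminus\{0\})$ together with the recursion into $\ker\phi$ only handles $t$-dimensional subspaces $Y$ that either project onto all of $\mathbb{F}_2^t$ or lie entirely inside $\ker\phi$. The intermediate cases are missed: a $t$-dimensional $Y$ with $\{0\}\lneq\phi(Y)\lneq\mathbb{F}_2^t$ and $\phi(Y)\cap L=\{0\}$ meets none of the three cosets $T_0$, and the recursive piece only constrains $Y\cap\ker\phi$, which can have dimension as small as $2$. Concretely, for $t\geq 4$ pick a $(t-2)$-dimensional $W_1\subset\mathbb{F}_2^t$ with $W_1\cap L=\{0\}$, a lift $A$ of $W_1$ with $\phi|_A$ injective, a $2$-dimensional $W_2\subset\mathbb{F}_2^t$ with $W_2\cap L'=\{0\}$, and a lift $K_1\subset\ker\phi$ of $W_2$ with $\phi'|_{K_1}$ injective; then $Y=A\oplus K_1$ is a $t$-dimensional subspace with $Y\cap\ker\phi\cap\ker\phi'=\{0\}$, so $Y\setminus\{0\}$ avoids $T$ entirely and gives a copy of $PG(t-1,2)$ in $M$. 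Thinning the cosets only removes elements from $T$ and so can only make this worse. Two further gaps you already flag yourself are real: the density $3/(2^t-1)=3\cdot 2^{-t}+\Theta(4^{-t})$ never gets below $3\cdot 2^{-t}+\epsilon$ for small $\epsilon$, and the ``thinning'' needed to kill large subspaces in $T\cup\{0\}$ is left entirely unspecified despite being the step that would make $\chi(M_i)\to\infty$.

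The standard construction (the one underlying the cited result) projects to $\mathbb{F}_2^{t-2}$ rather than $\mathbb{F}_2^t$ and does not recurse within the same $t$. Take $\pi\colon\mathbb{F}_2^r\to\mathbb{F}_2^{t-2}$ and set $E(M)=\bigl(\mathbb{F}_2^r\setminus\ker\pi\bigr)\cup E(N)$ where $N$ is a triangle-free ($PG(1,2)$-free) matroid on $\ker\pi\cong\mathbb{F}_2^{r-t+2}$ with $|N|/2^{r-t+2}\to\tfrac14$ and $\chi(N)\to\infty$. Any $t$-dimensional $Y$ has $\dim\pi(Y)\leq t-2$, hence $\dim(Y\cap\ker\pi)\geq 2$, and that $\geq 2$-dimensional subspace would give a triangle inside $N$; so $M$ is $PG(t-1,2)$-free with no intermediate cases to worry about. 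The density is $1-2^{-(t-2)}+\tfrac14\cdot 2^{-(t-2)}-o(1)=1-3\cdot 2^{-t}-o(1)$, and since the complement of $E(M)$ lies entirely in $\ker\pi$, one gets $\chi(M)=(t-2)+\chi(N)\to\infty$. This reduces the whole theorem to the $t=2$ base case, which is the actual content of the cited Geelen--Nelson paper. Your approach of recursing with the same $t$ and trying to get both covering and no-subspace from the same geometric layers conflates what should be two independent requirements, and I don't see how to repair it without essentially passing to the $\pi$-based construction above.
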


\begin{thm}[Theorem 3.16 in \cite{bw}, based on \cite{dt}]
\label{thm:dt}
Any $PG(1,2)$-free matroid $M$ with $|M|>\frac142^{r(M)}$ satisfies $\chi(M)\leq 2$.
\end{thm}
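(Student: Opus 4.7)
The plan is to work with the equivalent formulation that $E := E(M) \subseteq \mathbb{F}_2^r \setminus \{0\}$ is \emph{sum-free}, i.e., no distinct $x, y \in E$ have $x + y \in E$; the goal is to produce a codim-$2$ subspace of $\mathbb{F}_2^r$ disjoint from $E$. The first step is an affine-hyperplane dichotomy: if some affine hyperplane $H_1 = \phi^{-1}(1)$ has $|E \cap H_1| > 2^{r-2} = |H_1|/2$, then in fact $E \subseteq H_1$ and $\chi(M) \leq 1$. Indeed, setting $A := E \cap H_1$, if some $b \in E$ has $\phi(b) = 0$ then $A + b \subseteq \phi^{-1}(1) = H_1$ with $|A+b| = |A| > |H_1|/2$, so $A \cap (A+b) \neq \emptyset$; this yields a triangle $\{a, b, a+b\} \subseteq E$, contradicting sum-freeness.

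Assume henceforth that every affine hyperplane meets $E$ in at most $2^{r-2}$ elements, and proceed by induction on $r$ (base cases $r \leq 2$ are trivial). An averaging argument over the $2^r - 1$ linear hyperplanes of $\mathbb{F}_2^r$ produces some $H$ with $|A| > 2^{r-3}$, where $A := E \cap H$. Since $A$ is sum-free in $H \cong \mathbb{F}_2^{r-1}$ with $|A| > 2^{(r-1)-2}$, the inductive hypothesis yields a codim-$2$ subspace $U \subseteq H$ disjoint from $A$; because $U \subseteq H$ it is also disjoint from $E \setminus H$, so $U \cap E = \emptyset$. However $U$ has codim $3$ in $\mathbb{F}_2^r$, so we must extend it by one additional coset to reach codim $2$.

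Writing $n(\bar c) := |E \cap (U + \bar c)|$ for cosets indexed by $\bar c \in \mathbb{F}_2^r/U \cong \mathbb{F}_2^3$, we have $n(\bar 0) = 0$ and want another $\bar c$ with $n(\bar c) = 0$; then $U \cup (U + \bar c)$ is the desired codim-$2$ subspace. The key tool is a \emph{triangle-sum lemma}: if distinct nonzero $\bar x, \bar y \in \mathbb{F}_2^3$ satisfy $n(\bar x) + n(\bar y) > 2^{r-3}$, then $n(\bar x + \bar y) = 0$. To see this, for any $z_0 \in U + \bar x + \bar y$ the sets $E \cap (U+\bar x)$ and $z_0 + (E \cap (U+\bar y))$ both sit inside $U + \bar x$ (of size $2^{r-3}$) with combined size exceeding $2^{r-3}$, so they intersect and yield distinct $e_1, e_2 \in E$ with $e_1 + e_2 = z_0$; if additionally $z_0 \in E$, the triple $\{e_1, e_2, z_0\}$ would be a triangle in $E$, contradicting sum-freeness.

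The main obstacle is ensuring the triangle-sum lemma actually fires. A direct pigeonhole over the $21$ pairs of distinct nonzero elements of $\mathbb{F}_2^3$ only forces $|E| \leq 7 \cdot 2^{r-4}$ in the contrary ``flat'' case where every pair-sum is $\leq 2^{r-3}$, which falls short of the needed $|E| \leq 2^{r-2} = 4 \cdot 2^{r-4}$. To close this gap I would combine: (a) the affine-hyperplane bound $|E \cap H'| \leq 2^{r-2}$ for every affine $H'$, which imposes further linear constraints on the mass distribution $\{n(\bar c)\}$; (b) flexibility in the choice of $H$, allowing $U$ to vary; and (c) a careful case analysis of the near-extremal ``flat'' configurations, showing each is either inconsistent with sum-freeness of $E$ or admits an alternative codim-$2$ subspace disjoint from $E$.
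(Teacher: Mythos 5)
This statement is quoted from Bruen--Wehlau (Theorem 3.16 of \cite{bw}, building on Davydov--Tombak \cite{dt}) and is used as the base case of the induction; the paper does not reprove it, so there is no in-paper proof to compare against. On its own terms, your proposal gets the first three moves right: the affine-hyperplane dichotomy (if $|E\cap\phi^{-1}(1)|>2^{r-2}$ then sum-freeness forces $E\subseteq\phi^{-1}(1)$ and $\chi\leq1$; the same pigeonhole also gives $|E\cap H|\leq 2^{r-2}$ for linear hyperplanes $H$), the averaging over linear hyperplanes to find $H$ with $|E\cap H|>2^{r-3}$, the inductive step inside $H$ producing a codimension-$3$ subspace $U$ disjoint from $E$, and the triangle-sum lemma (if $n(\bar x)+n(\bar y)>2^{r-3}$ then $n(\bar x+\bar y)=0$) are all correct.

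However, the argument does not close, and you yourself flag this. In the ``flat'' case where every pair-sum $n(\bar x)+n(\bar y)\leq 2^{r-3}$, the best you extract is $|E|\leq 7\cdot2^{r-4}$, which does not contradict $|E|>4\cdot2^{r-4}$. Your proposed fixes (a)--(c) are not carried out, and it is not clear they can be made to work at the level of coset counts alone: the uniform profile $n(\bar c)=2^{r-4}$ for all seven nonzero $\bar c$ (so $|E|=7\cdot2^{r-4}$) satisfies every constraint you list --- each pair-sum equals $2^{r-3}$ (so the lemma does not fire), each linear-hyperplane sum is $3\cdot2^{r-4}<2^{r-2}$, and each affine-hyperplane sum is $4\cdot2^{r-4}=2^{r-2}$. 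To rule this and nearby profiles out you would have to exploit the internal structure of $E$ within the cosets (or genuinely vary $H$ and $U$ and track how the counts transform), not merely the vector $(n(\bar c))$. As written, the last step is a research plan rather than a proof, so the proposal has a genuine gap.

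One smaller technical point: the inductive hypothesis is stated for full-rank matroids, but $E\cap H$ need not be full-rank in $H$. This is easy to repair (prove the statement for arbitrary triangle-free subsets of $\mathbb{F}_2^{r'}\setminus\{0\}$), but it should be said.
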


\begin{thm}[\cite{gs}]
\label{thm:gs}
For $t\geq 2$, any $PG(t-1,2)$-free matroid $M$ with $|M|>(1-\frac{11}4\cdot2^{-t})2^{r(M)}$ satisfies $\chi(M)=t-1$.
\end{thm}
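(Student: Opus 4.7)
The inequality $\chi(M)\geq t-1$ is immediate: if $\chi(M)\leq t-2$, then a codimension-$(t-2)$ subspace disjoint from $E(M)$ confines $E(M)$ to $2^{t-2}-1$ nonzero cosets, giving $|M|\leq (1-4\cdot 2^{-t})\cdot 2^{r(M)}$, which contradicts the hypothesis since $\tfrac{11}{4}<4$.

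For the harder direction $\chi(M)\leq t-1$, pass to the complement $B:=\mathbb{F}_2^{r(M)}\setminus (E(M)\cup\{0\})$. Then $|B|<\tfrac{11}{4}\cdot 2^{r(M)-t}$ and the $PG(t-1,2)$-free condition translates to the statement that every $t$-dimensional subspace of $\mathbb{F}_2^{r(M)}$ meets $B$. The goal becomes locating a codimension-$(t-1)$ subspace $W$ with $W\setminus\{0\}\subseteq B$, i.e., a structured hole of $2^{r(M)-t+1}-1$ points inside a blocking set of size less than $\tfrac{11}{4}\cdot 2^{r(M)-t}$. I would induct on $t$. The base case $t=2$ asserts that any sum-free $E(M)\subseteq \mathbb{F}_2^{r}\setminus\{0\}$ with $|E(M)|>\tfrac{5}{16}\cdot 2^{r}$ lies in a single coset of a hyperplane; this is a stability strengthening of Theorem~\ref{thm:dt} in the spirit of Davydov--Tombak, obtained from the inclusion $E(M)+E(M)\subseteq B\cup\{0\}$ (forced by sum-freeness, since any $e\in E(M)$ has $E(M)\cap (E(M)+e)=\emptyset$) together with sumset estimates that pin $E(M)$ inside a coset once $|E(M)|$ exceeds $\tfrac{1}{4}\cdot 2^{r}$.

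For the inductive step at $t\geq 3$, fix a hyperplane $H\leq \mathbb{F}_2^{r(M)}$ maximizing $|E(M)\cap H|$. Averaging shows $M|_H$ has full rank $r(M)-1$ and density exceeding $1-\tfrac{11}{4}\cdot 2^{-(t-1)}$, so the inductive hypothesis applies at parameter $t-1$ whenever $M|_H$ is $PG(t-2,2)$-free; this produces a codimension-$(t-2)$ subspace of $H$, equivalently codimension-$(t-1)$ in $\mathbb{F}_2^{r(M)}$, whose nonzero elements lie in $B\cap H\subseteq B$, finishing that case. Otherwise $M|_H$ contains some $PG(t-2,2)$ spanning a $(t-1)$-dimensional subspace $V\leq H$, and the $PG(t-1,2)$-freeness of $M$ forces every coset $v+V$ with $v\notin H$ to intersect $B$, producing at least $2^{r(M)-t}$ elements of $B$ outside $H$. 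By varying the embedded $PG(t-2,2)$ inside $M|_H$ and combining these local constraints against the global budget $|B|<\tfrac{11}{4}\cdot 2^{r(M)-t}$, I expect the collection of forced elements of $B$ to align into a single codimension-$(t-1)$ subspace lying inside $B\cup\{0\}$.

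The main obstacle is this second case of the inductive step. One must show that the many local obligations on $B$ from the embedded copies of $PG(t-2,2)$ in $M|_H$ combine into a globally coherent codimension-$(t-1)$ subspace in $B\cup\{0\}$, rather than merely producing a large but unstructured subset of $B$. The constant $\tfrac{11}{4}$ leaves only a narrow margin above the extremal density $1-3\cdot 2^{-t}$, so the counting has to be essentially tight, and careful bookkeeping between the forced elements of $B$ inside and outside $H$ (as well as their interaction with any would-be alternative choice of hyperplane $H$) is needed to close the argument.
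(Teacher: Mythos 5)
This theorem is cited from Govaerts and Storme~\cite{gs}; the paper uses it as a black box and offers no proof, so there is no in-paper argument to compare yours against. Evaluating your sketch on its own merits: the easy direction ($\chi(M)\geq t-1$) is correct, and the reformulation in terms of blocking sets $B$ is indeed the right frame (Govaerts and Storme's paper is precisely a classification of small nontrivial blocking sets in $PG(n,2)$). However, the two load-bearing steps are not established.

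First, the base case $t=2$ is not as stated. You claim that sumset estimates ``pin $E(M)$ inside a coset once $|E(M)|$ exceeds $\tfrac14\cdot 2^{r}$,'' but this is false: the set $\{e_1,e_2,e_3,e_4,e_1+e_2+e_3+e_4\}\subseteq\mathbb{F}_2^4$ is sum-free, has size $5>\tfrac14\cdot 16$, and has critical number $2$ (no hyperplane misses it, since the last element is the sum of the first four; but the plane $\{0,e_1+e_2,e_1+e_3,e_2+e_3\}$ does). The correct threshold is $\tfrac{5}{16}$, which is exactly the hypothesis of the theorem, so a genuine argument is required rather than a generic sumset bound. Theorem~\ref{thm:dt} only gives $\chi(M)\leq 2$ under the weaker hypothesis $|M|>\tfrac14\cdot 2^{r(M)}$; upgrading to $\chi(M)=1$ above $\tfrac{5}{16}$ is the Davydov--Tombak classification and needs to be invoked or reproved, not derived as a corollary of an unstated sumset lemma. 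Second, and more seriously, the inductive step's second case is where all the difficulty lies, and you explicitly leave it open: you produce, for each embedded $PG(t-2,2)$ in $M|_H$, at least $2^{r(M)-t}$ forced elements of $B$ off $H$, and then ``expect'' these constraints to align into a codimension-$(t-1)$ subspace inside $B\cup\{0\}$. That alignment is precisely the structural content of the Govaerts--Storme theorem; nothing in the sketch shows that the forced points must be flat rather than scattered, and the budget $|B|<\tfrac{11}{4}\cdot 2^{r(M)-t}$ alone is not enough, since one must also rule out configurations where different choices of $H$ and of embedded $PG(t-2,2)$'s force incompatible families of points. As written, the proposal reduces the theorem to an unproved stability statement rather than proving it.
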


We use similar ideas to Govaerts and Storme's proof of Theorem \ref{thm:gs} to show that $1-3\cdot2^{-t}$ is the critical threshold of $PG(t-1,2)$ and to extend Theorem \ref{thm:dt} to higher values of $t$. The following is the main result of this paper.

\begin{thm}
\label{thm:main}
For $t\geq 2$, any $PG(t-1,2)$-free matroid $M$ with $|M|>(1-3\cdot2^{-t})2^{r(M)}$ satisfies $\chi(M)\in\{t-1,t\}$.
\end{thm}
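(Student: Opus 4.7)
The natural first step is to reformulate in terms of the complement $S := \mathbb{F}_2^{r(M)} \setminus E(M)$: the hypothesis becomes $0 \in S$, $|S| < 3 \cdot 2^{r(M)-t}$, and every $t$-dimensional subspace of $\mathbb{F}_2^{r(M)}$ meets $S \setminus \{0\}$. The conclusion $\chi(M) \in \{t-1,t\}$ is the conjunction of ``$S$ contains a codim-$t$ subspace'' (for $\chi(M) \leq t$) and ``$S$ contains no codim-$(t-2)$ subspace'' (for $\chi(M) \geq t-1$); the latter is immediate from $|S| < 4 \cdot 2^{r(M)-t}$. The case $t = 2$ is just Theorem \ref{thm:dt}, so assume $t \geq 3$ throughout.

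The first structural observation I would exploit is that $M$ must contain a copy of $PG(t-2,2)$. Indeed, were $M$ instead $PG(t-2,2)$-free, then since $3 \cdot 2^{-t} < (11/4) \cdot 2^{-(t-1)}$ (equivalently $3 < 11/2$), Theorem \ref{thm:gs} applied at parameter $t-1$ would give $\chi(M) = t-2$, contradicting the already-established lower bound $\chi(M) \geq t-1$. Fix $U$, a $(t-1)$-dimensional subspace with $U \setminus \{0\} \subset E(M)$. The $PG(t-1,2)$-free condition then says that every nonzero coset of $U$ meets $S$ (otherwise $U \oplus \langle v \rangle$ would be a $PG(t-1,2)$ in $M$). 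Together with $|S \cap U| = 1$, this forces $|S| \geq 2^{r(M)-t+1}$, and combined with the upper bound more than $2^{r(M)-t}-1$ nonzero cosets of $U$ meet $S$ in exactly one point.

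I would then prove $\chi(M) \leq t$ by induction on $r(M)$, the base case $r(M) = t$ being direct (then $|S| \leq 2$ so $S$ is a $1$-dimensional subspace). For the inductive step, restrict attention to hyperplanes $H \supset U$. An averaging calculation --- using that $|S \cap U| = 1$ and that each element of $S \setminus U$ lies in only $2^{r(M)-t}-1$ of the $2^{r(M)-t+1}-1$ hyperplanes through $U$ --- shows the mean of $|S \cap H|$ over $H \supset U$ is strictly less than $3 \cdot 2^{r(M)-t-1}$. So some $H \supset U$ satisfies $|S \cap H| < 3 \cdot 2^{r(M)-t-1}$, and for $t \geq 3$ one checks that $r(M|_H) = r(M) - 1$. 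Applying the inductive hypothesis gives $\chi(M|_H) \in \{t-1,t\}$; in the easy case $\chi(M|_H) = t-1$, the codim-$(t-1)$-in-$H$ subspace inside $S \cap H$ is automatically a codim-$t$ subspace of $\mathbb{F}_2^{r(M)}$ in $S$, finishing.

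The main obstacle is the case $\chi(M|_H) = t$: one obtains only a codim-$(t+1)$ subspace $K \subset H$ with $K \subset S$, and must find $v \notin H$ with $K + v \subset S$ in order to promote $K$ to a codim-$t$ subspace $K \oplus \langle v \rangle$ inside $S$. A direct pigeonhole on cosets of $K$ in $H^c$ is too weak, since $|S \cap H^c|$ can be small. The lift must instead exploit both the forced coset-structure of $S$ relative to $U$ and the freedom to vary $H$ among the $2^{r(M)-t+1}-1$ hyperplanes through $U$ (and to vary $K$ among the codim-$(t+1)$ subspaces of $H$ inside $S$). My expectation is that if the lift fails for all valid choices, a careful double-count of the shortfalls across hyperplanes forces $|S| \geq 3 \cdot 2^{r(M)-t}$, contradicting the strict density hypothesis. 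Carrying out this double-count quantitatively --- essentially ruling out the extremal configuration where $S$ looks like three codim-$t$ subspaces in general position --- is where I expect the bulk of the technical work to lie.
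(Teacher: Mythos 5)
Your proposal has a genuine, acknowledged gap, and it sits exactly at the crux of the theorem. The case $\chi(M|_H)=t$ --- where you obtain a codimension-$(t+1)$ subspace $K\subset H$ inside $S$ and must find $v\notin H$ with $K+v\subset S$ --- is not a ``technical'' loose end but the entire difficulty. You correctly diagnose that a direct pigeonhole on cosets of $K$ off $H$ fails (there are $2^{t}$ such cosets but $|S\setminus H|$ can be as small as about $2^{r(M)-t}$, so on average a coset meets $S$ in only about one point, far short of the $2^{r(M)-t-1}$ you need). You then gesture at a double-count over hyperplanes $H\supset U$ and choices of $K$, but you give no mechanism to prevent the shortfall from being spread thinly across all choices; in fact the extremal near-counterexamples (three nearly-disjoint codimension-$t$ subspaces, as in the Geelen--Nelson lower bound construction) are precisely the configurations in which every single $K$ in every $H$ fails to lift, so ``rule out the extremal configuration'' is the whole problem, not a residual case. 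As written this is a conjecture, not a proof.

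It is also worth noting that the paper's proof follows a structurally different path that sidesteps the lifting problem entirely: it inducts on $t$ rather than on rank, and it never tries to extend a subspace found inside a restriction. Instead it supposes for contradiction $\chi(M)>t$ and derives contradictory density bounds on $Y=\mathbb{F}_2^{r(M)}\setminus E(M)$. The key inputs are: (a) hyperplane restrictions $M\cap H$ together with the $t-1$ case force $|Y\setminus H|\geq 2^{r(M)-t}$ for every hyperplane; (b) the doubling matroid $M_v$ (with $x\in E(M_v)$ iff $x,x+v\in E(M)$) together with the $t-1$ case and Theorem~\ref{thm:gs} force $|Y\cap(Y+v)|<2^{r(M)-t-1}$ for all $v\in E(M)$ and produce a covering of $E(M)$ by more than $2^t-3$ codimension-$(t-1)$ subspaces each meeting $Y$ in at least $2^{r(M)-t}$ points; and (c) inclusion--exclusion on these subspaces contradicts the hyperplane bound (for $t\geq 6$), with separate arguments --- a combinatorial lemma about degrees in an auxiliary graph for $t=5$, and Fourier analysis estimating $\sum_{v\in E(M)}|Y\cap(Y+v)|$ for $t=3,4$. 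The common theme is to trade your hard ``existence of a lift'' problem for soft counting contradictions; your rank-induction framing, while natural, lands you on the single step that has no known local argument.

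Your correct observations (that $\chi(M)\geq t-1$ is immediate from density, that $M$ contains a $PG(t-2,2)$ via Theorem~\ref{thm:gs}, that every nonzero coset of the ambient $(t-1)$-space $U$ must meet $S$, and the averaging bound over hyperplanes through $U$) are sound and do overlap with the paper's Proposition~\ref{thm:hyper}, but they only get you to where the real work begins.
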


\begin{rem}
\label{thm:rmk}
Note that any matroid $M$ with $\chi(M)\leq t-2$ satisfies $|M|\leq(1-2^{-{t-2}})2^{r(M)}$. Thus $\chi(M)\geq t-1$ for any matroid $M$ satisfying the hypotheses of Theorems \ref{thm:gs} or \ref{thm:main}.
\end{rem}

The critical threshold of other matroids is still unknown, though Geelen and Nelson have shown the following upper bound.

\begin{thm}[\cite{gn2}]
Any matroid $N$ has critical threshold at most $1-2\cdot2^{-\chi(N)}$.
\end{thm}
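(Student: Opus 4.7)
Translating to the complement $X=\mathbb{F}_2^{r(M)}\setminus E(M)$, the hypothesis reads $|X|<3\cdot 2^{r(M)-t}$, and being $PG(t-1,2)$-free says every $t$-dimensional subspace of $\mathbb{F}_2^{r(M)}$ contains at least one nonzero element of $X$. Since the remark already guarantees $\chi(M)\ge t-1$, the task is to produce a subspace of codimension $t$ in $\mathbb{F}_2^{r(M)}$ that lies entirely in $X$.

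The plan is induction on $n:=r(M)$. In the base case $n=t$ the density bound forces $|X|\le 2$, so the codimension-$t$ subspace $\{0\}\subset X$ already suffices. For the inductive step, I may assume $|X|\ge \tfrac{11}{4}\cdot 2^{n-t}$, since otherwise Theorem~\ref{thm:gs} yields $\chi(M)=t-1$ directly; thus $|X|$ lies in the narrow window $[\tfrac{11}{4}\cdot 2^{n-t},\,3\cdot 2^{n-t})$.

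I then average $|X\cap H|$ over the $2^n-1$ hyperplanes $H$ of $\mathbb{F}_2^n$. Since every nonzero vector lies in exactly $2^{n-1}-1$ hyperplanes while $0$ lies in all of them, this average equals $1+(|X|-1)\cdot\tfrac{2^{n-1}-1}{2^n-1}$, which is strictly below $(|X|+1)/2$ and hence strictly below $3\cdot 2^{(n-1)-t}$. Some hyperplane $H$ therefore satisfies $|X\cap H|<3\cdot 2^{(n-1)-t}$; after the easy subcase in which $E(M)\cap H$ fails to span $H$ (pass to its span), $M|_H$ has rank $n-1$, is $PG(t-1,2)$-free, and satisfies the density hypothesis of Theorem~\ref{thm:main} one rank lower. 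The induction hypothesis then gives $\chi(M|_H)\in\{t-1,t\}$. If $\chi(M|_H)=t-1$, a witnessing codimension-$(t-1)$ subspace of $H$ has codimension $t$ in $\mathbb{F}_2^n$ and lies in $X$, and we are done.

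The substantive case is $\chi(M|_H)=t$, where the induction only supplies a codimension-$(t+1)$ subspace $V\subseteq H$ with $V\subseteq X$, and we must lift this to a codimension-$t$ subspace inside $X$. To do so I study the quotient $\pi\colon\mathbb{F}_2^n\to\mathbb{F}_2^n/V\cong\mathbb{F}_2^{t+1}$: a nonzero $\bar w$ whose fiber $V+w$ lies entirely in $X$ yields the codimension-$t$ subspace $V\oplus\langle w\rangle\subseteq X$ that we need. The density bound constrains the total $X$-mass outside $V$ to be less than roughly $\tfrac{5}{2}\cdot 2^{n-t}$ spread across $2^{t+1}-1$ fibers, while the $PG(t-1,2)$-free condition, applied to $t$-dimensional subspaces of $\mathbb{F}_2^n$ transverse to $V$ and pushed through $\pi$, imposes combinatorial constraints on how these fiber ``defects'' can be distributed, effectively reducing the question to a small-rank blocking-set problem inside $\mathbb{F}_2^{t+1}$. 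The hard part will be combining these constraints, in the spirit of the Govaerts-Storme stability analysis, to rule out every fiber distribution other than those leaving a complete nonzero fiber inside $X$; this coset-lifting step, especially near the upper end of the density window where the averaging argument becomes tight, is where I expect the bulk of the technical work to lie.
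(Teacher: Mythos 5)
The statement you were asked about is a theorem cited from Geelen and Nelson \cite{gn2}; the present paper does not prove it, so there is no internal proof to compare against. More to the point, your proposal does not address the cited statement. That theorem asserts that \emph{every} matroid $N$ has critical threshold at most $1-2\cdot2^{-\chi(N)}$; it concerns an arbitrary excluded minor $N$ and promises only that the critical number is bounded (by some unspecified constant) once the density exceeds that threshold. What you have written instead is an attempted proof of the paper's main result, Theorem~\ref{thm:main}, specialized to $N=PG(t-1,2)$: you take the hypothesis $|X|<3\cdot 2^{r(M)-t}$, which corresponds to density above $1-3\cdot 2^{-t}$, and you aim to exhibit a codimension-$t$ subspace inside $X$, i.e.\ $\chi(M)\le t$. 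Note also that for $N=PG(t-1,2)$ the cited theorem is vacuous: by the Bose--Burton theorem (Theorem~\ref{thm:bb}), no $PG(t-1,2)$-free matroid has density exceeding $1-2\cdot 2^{-t}$, so there is nothing above that threshold to bound. The content of the cited theorem lies in general $N$, which your projective-geometry-specific argument does not touch.

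Even reread as an attempt at Theorem~\ref{thm:main}, the argument has a decisive unfilled gap. The averaging step producing a hyperplane $H$ with $|X\cap H|<3\cdot 2^{(r(M)-1)-t}$ is sound, and passing to $M|_H$ and invoking the theorem one rank lower is legitimate. But when the inductive hypothesis returns $\chi(M|_H)=t$, you have only a codimension-$(t+1)$ subspace $V\subseteq X$ inside $H$, and lifting this to a codimension-$t$ subspace of $\mathbb{F}_2^{r(M)}$ contained in $X$ is exactly the hard part --- you explicitly leave it open, and there is no evident reason the coset-lifting program can be completed near the top of the density window. The paper's own proof of Theorem~\ref{thm:main} deliberately avoids this obstacle: it analyzes the doubled matroids $M_v$ to extract many codimension-$(t-1)$ subspaces rich in $X$ (Proposition~\ref{thm:quot}), derives intersection and codimension bounds from them, and for small $t$ supplements these with a Fourier-analytic lower bound (Proposition~\ref{thm:fourier}). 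That is a genuinely different mechanism from the rank-induction you sketch, and it is chosen precisely because the lifting step you defer does not go through directly.
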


They also conjecture the following, of which the lower bound is known \cite{gn}.

\begin{conj}
A matroid $N$ with $\chi(N)\geq 2$ has critical threshold $1-i\cdot2^{-\chi(N)}$ for some $i\in\{2,3,4\}$.
\end{conj}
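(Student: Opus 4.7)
The plan is to prove Theorem \ref{thm:main} by induction on $t \geq 2$, extending the approach used by Govaerts and Storme for Theorem \ref{thm:gs}. The base case $t = 2$ is exactly Theorem \ref{thm:dt}, combined with the lower bound $\chi(M) \geq 1$ noted in the remark. For the inductive step, consider a $PG(t-1,2)$-free matroid $M$ with $|M| > (1 - 3 \cdot 2^{-t})2^{r(M)}$. By Theorem \ref{thm:gs} I may assume $|M| \leq (1 - \tfrac{11}{4}\cdot 2^{-t})2^{r(M)}$, since otherwise $\chi(M) = t-1$, and I may further assume $\chi(M) \geq t$, since the case $\chi(M) = t-1$ is already permitted by the conclusion. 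Writing $T = \mathbb{F}_2^{r(M)} \setminus (E(M) \cup \{0\})$ for the set of non-edges, this places me in the narrow regime $\tfrac{11}{4} \cdot 2^{r(M)-t} \leq |T| < 3 \cdot 2^{r(M)-t}$, with the additional structural constraint that $T$ contains no $(r(M)-t+1)$-dimensional subspace minus $\{0\}$.

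To invoke the inductive hypothesis at $t-1$, I would seek a hyperplane $H \leq \mathbb{F}_2^{r(M)}$ such that (a) $M|H$ is $PG(t-2,2)$-free and (b) $|M \cap H| > (1 - 3 \cdot 2^{-(t-1)})2^{r(M)-1}$. Given such $H$, the inductive hypothesis gives $\chi(M|H) \leq t-1$, and any $(r(M)-t)$-dimensional subspace $U \subseteq H$ disjoint from $E(M|H)$ is then automatically disjoint from $E(M)$, yielding $\chi(M) \leq t$. To produce such $H$, one argues that the number of hyperplanes failing (a) or (b) is strictly less than the total $2^{r(M)}-1$. The density condition (b) is controlled by the averaging identity $\sum_H (2^{r(M)-1} - 1 - |M \cap H|) = (2^{r(M)-1}-1)|T|$ combined with $|T| < 3 \cdot 2^{r(M)-t}$. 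Condition (a) fails precisely when $H$ contains a $(t-1)$-dimensional edge-subspace $W$ (a subspace with $W \setminus \{0\} \subseteq E(M)$); each such $W$ lies in exactly $2^{r(M)-t+1}-1$ hyperplanes, so the count reduces to bounding the number $N$ of edge-subspaces of $M$. Using the fact that $PG(t-1,2)$-freeness forces every nontrivial coset of each edge-subspace to meet $T$, the small size of $T$ should translate into a bound on $N$.

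The main obstacle is making the bound on $N$ sharp enough to combine with the density averaging. A priori $N$ can be as large as $2^{(r(M)-t+1)(t-1)}$, realized in the Bose-Burton extremal configuration $T = U \setminus \{0\}$ for an $(r(M)-t+1)$-dimensional $U$; however, this case is exactly $\chi(M) = t-1$, already ruled out under the reduction above. Exploiting the joint constraints---small $|T|$, absence of any $(r(M)-t+1)$-dimensional subspace in $T$, and $PG(t-1,2)$-freeness---to obtain a useful bound on $N$ is the key technical step, paralleling and refining the corresponding step in the Govaerts-Storme argument. If a single-step induction on $t$ turns out to be insufficient, a direct analysis of near-extremal configurations for $T$ (using the stability theory of Bose-Burton blocking sets) could replace the appeal to the theorem at $t-1$. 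Accepting the weaker conclusion $\chi(M) \in \{t-1, t\}$ is precisely what allows the density threshold to be pushed from $1 - \tfrac{11}{4}\cdot 2^{-t}$ up to the sharp value $1 - 3 \cdot 2^{-t}$.
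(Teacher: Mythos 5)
The statement you are asked about is a \emph{conjecture} of Geelen and Nelson, which the paper does not prove; the paper merely cites \cite{gn} for the lower bound and proves only the special case $N = PG(t-1,2)$ (Theorem \ref{thm:main}, giving $i=3$ when $\chi(N)=t$). Your proposal is an outline for proving Theorem \ref{thm:main} itself, and as such it does not address the conjecture at all: the conjecture is a claim about the critical threshold of an \emph{arbitrary} matroid $N$ with $\chi(N)\geq 2$, and your argument uses, at every stage, the very specific structure of $PG(t-1,2)$-freeness (edge-subspaces, Bose-Burton blocking sets, the precise constants $\tfrac{11}{4}$ and $3$). Nothing in your sketch suggests how to handle a general $N$ --- e.g.\ how to even identify which $i \in \{2,3,4\}$ should apply to a given $N$, or how to relate $N$-freeness to the critical number when $N$ is not a projective geometry. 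So the proposal is not a wrong proof of the conjecture; it is a (partial, heuristic) sketch of a different result.

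On its own terms as a sketch of Theorem \ref{thm:main}, your plan also diverges from the paper's actual strategy in a way that leaves a real gap. You hope to find a single hyperplane $H$ on which $M|H$ is simultaneously dense and $PG(t-2,2)$-free, and then apply the inductive hypothesis to $M|H$. But the paper's Proposition \ref{thm:hyper} makes the opposite observation: under the counterexample hypotheses, \emph{every} hyperplane $H$ has $M|H$ dense, and if $M|H$ were $PG(t-2,2)$-free the inductive hypothesis would force $\chi(M|H)\le t-1$ and hence $\chi(M)\le t$, a contradiction --- so in fact every dense hyperplane \emph{must} contain a $PG(t-2,2)$. That forced copy is then used to produce, via cosets, a lower bound $|Y\setminus H|\ge 2^{r(M)-t}$ for every hyperplane, and the rest of the paper's argument (the doubling construction $M_v$, the codimension-$(t-1)$ covering family $H_1,\dots,H_s$, and the Fourier estimate for small $t$) is built on top of that. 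Your approach, by contrast, needs a bound on the number $N$ of edge-subspaces that is sharp enough to beat the averaging identity, and you acknowledge that this is ``the key technical step'' without providing it; in the extremal regime $|T|$ close to $3\cdot 2^{r(M)-t}$, the crude counts you describe do not obviously close. So even restricted to $PG(t-1,2)$, the proposal has an unfilled gap precisely where the difficulty lies.
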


Our approach to prove Theorem \ref{thm:main} is to induct on $t$, using Theorem \ref{thm:dt} as the starting point. Our main argument in Section \ref{sec:6} shows that the $t-1$ case of the main theorem implies the $t$ case for $t\geq 6$. In Section \ref{sec:5}, we use some modification of the same methods to show that the $t=4$ case implies the $t=5$ case. Finally, in Section \ref{sec:34}, we use an alternative argument to show that Theorem \ref{thm:dt} implies the $t=3$ and $t=4$ cases.

\section{Proof of main theorem for $t\geq6$}
\label{sec:6}

Assuming the main theorem is true for some $t-1$, we suppose there exists a matroid $M$ that is a counterexample to the main theorem at this value of $t$. In Subsections \ref{ssec:hi} and \ref{ssec:2} we consider two auxiliary matroids and apply the $t-1$ case of the main theorem to them in order to derive several properties that any counterexample $M$ must satisfy. Finally, in Subsection \ref{ssec:main} we derive a contradiction from these properties. 

\subsection{Hyperplane intersection}
\label{ssec:hi}

We use the term hyperplane to refer to a linear subspace of codimension 1.

\begin{prop}
\label{thm:hyper}
Fix $t\geq3$. Assuming Theorem \ref{thm:main} for $t-1$, say $M$ is a $PG(t-1,2)$-free matroid with $|M|>(1-3\cdot2^{-t})2^{r(M)}$ and $\chi(M)>t$. Set $Y=\mathbb{F}_2^{r(M)}\setminus E(M)$. Then for any hyperplane $H<\mathbb{F}_2^{r(M)}$, we have\[|Y\setminus H|\geq 2^{r(M)-t}.\]
\end{prop}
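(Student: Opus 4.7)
The plan is to suppose for contradiction that $|Y \setminus H| < 2^{r-t}$ (writing $r = r(M)$ and $C = \mathbb{F}_2^r \setminus H$, the assumption reads $|Y \cap C| < 2^{r-t}$) and to apply Theorem~\ref{thm:main} at $t-1$ to an appropriate sub-matroid of $M$ lying inside $H$. A codimension-$(t-1)$ subspace produced by induction, combined with the codimension-$1$ gain from $H$ itself, will yield a codimension-$t$ subspace of $\mathbb{F}_2^r$ disjoint from $E(M)$, contradicting $\chi(M) > t$.

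The sub-matroid to work with is $M \cap \tilde H$, where $\tilde H \subseteq H$ is the linear span of $E(M) \cap H$; this way $M \cap \tilde H$ has full rank $r' = \dim \tilde H$ (which is essential for applying the inductive hypothesis, since $M \cap H$ itself need not span $H$ for small $t$), and $H \setminus \tilde H \subseteq Y$ automatically. The density bound $|M \cap H| \ge |M| - |C| > 2^{r-1} - 3 \cdot 2^{r-t}$, together with $|\tilde H| \le 2^{r-1}$, gives density in $\tilde H$ strictly greater than $1 - 3 \cdot 2^{-(t-1)}$, meeting the inductive threshold. For the $PG(t-2,2)$-freeness check: if $V \subseteq \tilde H$ were a $(t-1)$-dimensional subspace with $V \setminus \{0\} \subseteq E(M)$, then each of the $|C|/|V| = 2^{r-t}$ cosets of $V$ inside $C$ would have to contain a non-edge (else some $v \in C$ with $v + V \subseteq E(M)$ would make $\langle V, v\rangle$ a $PG(t-1,2)$ in $M$). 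But that forces $|Y \cap C| \ge 2^{r-t}$, contradicting the contradiction assumption.

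Having verified all inductive hypotheses, we get $\chi(M \cap \tilde H) \le t - 1$, producing a subspace $W \subseteq \tilde H$ of codimension $t-1$ in $\tilde H$ with $W \cap E(M) = \emptyset$. Fix any linear complement $V'$ of $\tilde H$ inside $H$; then $W + V'$ has dimension $r - t$, and every nonzero element $w + v'$ lies either in $W$ (not an edge) or, when $v' \ne 0$, in $H \setminus \tilde H \subseteq Y$ (not an edge). So $W + V'$ is a codimension-$t$ subspace of $\mathbb{F}_2^r$ disjoint from $E(M)$, forcing $\chi(M) \le t$ and completing the contradiction. The main obstacle is the $PG(t-2,2)$-freeness step: the coset count in $C$ and the contradiction bound on $|Y \cap C|$ are both exactly $2^{r-t}$, so it is only the strict inequality that allows the covering argument to produce the enlarged projective geometry.
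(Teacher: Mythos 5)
Your proof is correct and takes essentially the same approach as the paper: apply the inductive hypothesis to $E(M)\cap H$, rule out the low-critical-number branch via extension to a codimension-$t$ subspace of the whole space, and count the cosets of a potential $PG(t-2,2)$ lying off $H$. You have merely written the argument in contrapositive form (assume $|Y\setminus H|<2^{r-t}$ and derive $\chi(M)\leq t$), and you spell out explicitly the detail that $E(M)\cap H$ may not span $H$, using the span $\tilde H$ and a complement $V'$ to build the codimension-$t$ subspace -- a point the paper treats implicitly.
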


\begin{proof}
We see that \[|E(M)\cap H|\geq |M|-|\mathbb F_2^{r(M)}\setminus H|>(1-3\cdot2^{1-t})2^{r(M)-1}.\] Therefore, by assumption, either $E(M)\cap H$ contains $PG(t-2,2)$ or $\chi(E(M)\cap H)\leq t-1$. The latter is not possible, since a subspace of $H$ of codimension at most $t-1$ that is disjoint from $E(M)\cap H$ is a subspace of the whole space of codimension at most $t$ that is still disjoint from $E(M)$.

Therefore there is some copy of $PG(t-2,2)$ in $E(M)\cap H$. Let $G$ be the dimension $t-1$ subspace of $H$ that contains this projective geometry. We use $G+v$ to denote the coset $\{g+v : g\in G\}$. Suppose there is some $v$ such that $G+v\subset E(M)$. Then there is a copy of $PG(t-1,2)$ in $E(M)$: just take our original copy of $PG(t-2,2)$ and all of $G+v$. This is impossible since $M$ is $PG(t-1,2)$-free, so every coset of $G$ must intersect $Y$. There are $2^{r(M)-t+1}$ cosets of $G$ in $\mathbb F_2^{r(M)}$ half of which are disjoint from $H$. By the above argument, each of these $2^{r(M)-t}$ cosets of $G$ off of $H$ must contain an element of $Y$, so the desired inequality follows.
\end{proof}

We can use the above proposition to give an upper bound on $|Y\cap H|$ for $H$ of any fixed codimension. We will use the following three bounds later in the argument.

\begin{cor}
\label{thm:hyperbounds}
Fix $t\geq3$. Assuming Theorem \ref{thm:main} for $t-1$, say $M$ is a $PG(t-1,2)$-free matroid with $|M|>(1-3\cdot2^{-t})2^{r(M)}$ and $\chi(M)>t$. Set $Y=\mathbb{F}_2^{r(M)}\setminus E(M)$. For any subspace $H<\mathbb{F}_2^{r(M)}$, we have
\begin{enumerate}[(i)]
\item if $\codim{H}=1$, then $|Y\cap H|<2\cdot 2^{r(M)-t}$,
\item if $\codim{H}=2$, then $|Y\cap H|<\frac32\cdot 2^{r(M)-t}$,
\item if $\codim{H}=3$, then $|Y\cap H|<\frac54\cdot 2^{r(M)-t}$.
\end{enumerate}
\end{cor}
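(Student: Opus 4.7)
The strategy is to apply Proposition~\ref{thm:hyper} to every hyperplane containing $H$ and then double-count. First, the density hypothesis $|M| > (1-3\cdot 2^{-t})2^{r(M)}$ rearranges to $|Y| < 3 \cdot 2^{r(M)-t}$, which will serve as the global ceiling throughout.

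Part (i) is immediate: $|Y \cap H| = |Y| - |Y \setminus H| < 3 \cdot 2^{r(M)-t} - 2^{r(M)-t}$ by Proposition~\ref{thm:hyper}. For parts (ii) and (iii), I would set $k = \codim H \in \{2,3\}$ and partition $V := \mathbb{F}_2^{r(M)}$ into $H$ together with its $2^k - 1$ nonzero cosets $C_1, \ldots, C_{2^k-1}$. The hyperplanes of $V$ containing $H$ correspond to nonzero linear forms on the $k$-dimensional quotient $V/H$, so there are exactly $2^k-1$ of them; call them $H_1, \ldots, H_{2^k-1}$. A short linear-duality count shows that each coset $C_j$ lies in exactly $2^{k-1}-1$ of these hyperplanes, or equivalently, is disjoint from exactly $2^{k-1}$ of them.

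I would then sum the inequality $|Y \setminus H_i| \geq 2^{r(M)-t}$ from Proposition~\ref{thm:hyper} over all $2^k - 1$ hyperplanes containing $H$ and swap the order of summation; the coset count above converts this into $2^{k-1} \sum_j |Y \cap C_j| \geq (2^k - 1) \cdot 2^{r(M)-t}$, so $\sum_j |Y \cap C_j| \geq (2 - 2^{1-k}) \cdot 2^{r(M)-t}$. Subtracting from $|Y| < 3 \cdot 2^{r(M)-t}$ yields $|Y \cap H| < (1 + 2^{1-k}) \cdot 2^{r(M)-t}$, which for $k=2,3$ recovers the claimed constants $\tfrac32$ and $\tfrac54$. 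No step looks like a real obstacle here; the only non-arithmetic ingredient is the coset-in-hyperplane count, which is a standard duality fact over $\mathbb{F}_2$.
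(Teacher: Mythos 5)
Your argument is correct, and it takes a genuinely different route from the paper's. The paper proves (ii) and (iii) by an inductive pigeonhole/contrapositive argument: if $H$ has codimension $k$ and $|Y\setminus H|$ is too small, one of the $2^k-1$ nonzero cosets of $H$ must contain at least an average share of $Y\setminus H$, and merging it with $H$ produces a codimension-$(k-1)$ subspace violating the already-established bound for that codimension. Your approach instead sums the lower bound of Proposition~\ref{thm:hyper} over all $2^k-1$ hyperplanes containing $H$ and double-counts via the standard fact that each nonzero coset of $H$ avoids exactly $2^{k-1}$ of those hyperplanes, yielding $|Y\setminus H|\geq(2-2^{1-k})2^{r(M)-t}$ directly. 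The two methods give identical constants (as one would expect, since the pigeonhole step is tight exactly when the cosets share $Y\setminus H$ equally, which is the double-counting average), but yours is non-inductive, treats all codimensions uniformly in a single formula $|Y\cap H|<(1+2^{1-k})2^{r(M)-t}$, and isolates the only geometric input as the coset-hyperplane incidence count; the paper's version is shorter to state for the three specific cases needed but requires chaining the cases together.
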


\begin{proof}
The first inequality follows directly from Proposition \ref{thm:hyper} and the other two follow by the pigeonhole principle. Namely, for $H$ a codimension-2 subspace the pigeonhole principle implies that we can find a coset $H+v$ with $v\not\in H$ such that $|Y\cap (H+v)|\geq\frac13|Y\setminus H|$. For the sake of contradiction, suppose that $|Y\setminus H|<\frac32\cdot2^{r(M)-t}$. Then, setting $H'=H\cup (H+v)$, we have\[|Y\setminus H'|\leq\frac23|Y\setminus H|<2^{r(M)-t},\]which contradicts Proposition \ref{thm:hyper}. This proves $(ii)$.

Similarly, for the sake of contradiction, let $H$ be a codimension-3 subspace with $|Y\setminus H|<\frac74\cdot2^{r(M)-t}$. Then we can find a coset $H+v$ with $v\not\in H$ such that $|Y\cap (H+v)|\geq\frac17|Y\setminus H|$. Then setting $H'=H\cup(H+v)$, we have\[|Y\setminus H'|\leq\frac67|Y\setminus H|<\frac32\cdot2^{r(M)-t},\]which contradicts $(ii)$. This proves $(iii)$.
\end{proof}

\subsection{Doubling construction}
\label{ssec:2}

\begin{defn}
\label{defn:quot}
Let $M$ be a matroid. For $v\in\mathbb{F}_2^{r(M)}$, define the set $E(M_v)$ by $x\in E(M_v)$ if and only if both $x$ and $x+v\in E(M)$. We view $M_v$ as a matroid where $E(M_v)\subset\@span E(M_v)\cong\mathbb F_2^{r(M_v)}$ for some $r(M_v)\leq r(M)$. Note that since $M$ is simple, so is $M_v$ and $v\not\in E(M_v)$. 
\end{defn}

It is easy to compute the size of $M_v$ in terms of our original matroid.

\begin{lem}
\label{thm:pie}
Let $M$ be a matroid. Set $Y=\mathbb{F}_2^{r(M)}\setminus E(M)$. For any $v\in\mathbb{F}_2^{r(M)}$, we have\[|M_v|=2^{r(M)}-2|Y|+|Y\cap (Y+v)|,\]where $Y+v$ denotes the set $\{y+v : y\in Y\}$. In particular, if $|M|>(1-3\cdot2^{-t})2^{r(M)}$, then $|M_v|>(1-3\cdot2^{1-t})2^{r(M)}$.
\end{lem}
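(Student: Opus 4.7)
The plan is to rewrite $E(M_v)$ as an intersection and then apply inclusion-exclusion on the complement. By definition, $x\in E(M_v)$ iff both $x$ and $x+v$ lie in $E(M)$, which is to say that $x\notin Y$ and $x\notin Y+v$. Equivalently, $E(M_v)=\mathbb{F}_2^{r(M)}\setminus\bigl(Y\cup(Y+v)\bigr)$, so the formula will come from evaluating $|Y\cup(Y+v)|$.

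To carry this out, I would note that $|Y+v|=|Y|$ (translation is a bijection on $\mathbb{F}_2^{r(M)}$), apply inclusion–exclusion to get
\[|Y\cup(Y+v)|=|Y|+|Y+v|-|Y\cap(Y+v)|=2|Y|-|Y\cap(Y+v)|,\]
and subtract from $2^{r(M)}$ to conclude
\[|M_v|=2^{r(M)}-2|Y|+|Y\cap(Y+v)|.\]
The only small subtlety to check is that the formula correctly handles the point $0$ (and the point $v$): since $0\in Y$ always, the set on the right excludes $0$, matching the convention that $E(M_v)\subset\mathbb{F}_2^{r(M)}\setminus\{0\}$; the degenerate case $v=0$ also collapses cleanly to $|M|=2^{r(M)}-|Y|$.

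For the ``in particular'' clause, I would just drop the nonnegative term $|Y\cap(Y+v)|$ and use $|Y|<3\cdot2^{-t}\cdot 2^{r(M)}$, giving
\[|M_v|\geq 2^{r(M)}-2|Y|>\bigl(1-6\cdot 2^{-t}\bigr)2^{r(M)}=\bigl(1-3\cdot 2^{1-t}\bigr)2^{r(M)}.\]
There is no real obstacle here: the lemma is essentially a direct inclusion–exclusion identity, and the only thing one must not mishandle is the zero vector bookkeeping in passing between $E(M)$ and $Y$.
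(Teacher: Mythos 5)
Your proof is correct and follows essentially the same approach as the paper: identify $\mathbb{F}_2^{r(M)}\setminus E(M_v)=Y\cup(Y+v)$, apply inclusion-exclusion, and for the ``in particular'' clause drop the nonnegative term $|Y\cap(Y+v)|$. The zero-vector bookkeeping you flag is handled automatically since $0\in Y$ by the paper's convention.
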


\begin{proof}
We have $\mathbb{F}_2^{r(M)}\setminus E(M_v)=Y\cup(Y+v)$. Thus $|\mathbb{F}_2^{r(M)}\setminus E(M_v)|=|Y|+|Y+v|-|Y\cap(Y+v)|=2|Y|-|Y\cap(Y+v)|$, as desired.
\end{proof}

The matroid $M_v$ satisfies the following two useful properties.

\begin{prop}
\label{thm:quotp}
Fix $t\geq3$. Let $M$ be a matroid and pick $v\in E(M)$. If $M_v$ contains $PG(t-2,2)$, then $M$ contains $PG(t-1,2)$.
\end{prop}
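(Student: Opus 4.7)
The plan is to unfold the definitions: a copy of $PG(t-2,2)$ in $M_v$ corresponds to a $(t-1)$-dimensional subspace $G<\mathbb{F}_2^{r(M)}$ all of whose nonzero vectors lie in $E(M_v)$. By the definition of $M_v$, this means that for every nonzero $g\in G$, both $g$ and $g+v$ lie in $E(M)$. Since we also assume $v\in E(M)$, the natural candidate for a copy of $PG(t-1,2)$ inside $M$ is the subspace $G'=G+\langle v\rangle$.

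First I would verify that $G'$ is genuinely $t$-dimensional, i.e.\ that $v\notin G$. This is forced: if $v$ were a nonzero element of $G$, then $v\in E(M_v)$ would give $v+v=0\in E(M)$, contradicting the convention $0\notin E(M)$. (And $v\neq0$ since $v\in E(M)$.) Hence $G'=G\oplus\langle v\rangle$ has dimension $t$.

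Next I would check that every nonzero element of $G'$ lies in $E(M)$. Any such element has the form $g$, $v$, or $g+v$ for some nonzero $g\in G$. Elements of the first type lie in $E(M)$ because $g\in E(M_v)\subset E(M)$; the element $v$ lies in $E(M)$ by hypothesis; and elements of the third type lie in $E(M)$ because the condition $g\in E(M_v)$ gives $g+v\in E(M)$ directly. Thus $G'\setminus\{0\}\subset E(M)$, so the inclusion $\mathbb{F}_2^t\hookrightarrow\mathbb{F}_2^{r(M)}$ with image $G'$ exhibits a copy of $PG(t-1,2)$ inside $M$.

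There is no real obstacle here; the only subtlety is remembering that $v\notin G$ is automatic rather than an additional hypothesis, and this is what uses the nonemptiness convention $0\notin E(M)$ together with $v\in E(M)$.
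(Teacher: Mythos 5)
Your proof is correct and essentially matches the paper's: both take the $(t-1)$-dimensional span of the copy of $PG(t-2,2)$, verify that adjoining $v$ increases the dimension (you check $v\notin G$ directly; the paper equivalently checks $x+v\notin X$ for $x\in X$), and observe that every nonzero element of the resulting $t$-dimensional subspace lies in $E(M)$.
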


\begin{proof}
Say that $X\subset E(M_v)$ is isomorphic to $PG(t-2,2)$. By the definition of $E(M_v)$, we know that $X\cup(X+v)\cup\{v\}\subset E(M)$. Furthermore, this set is isomorphic to $PG(t-1,2)$ as long as $X$ and $X+v$ are disjoint. 

Now for any $x\in X$, we claim that $x+v\not\in X$. This is because for any $x_1,x_2\in PG(t-2,2)$, then $x_1+x_2\in PG(t-2,2)$. However, since $v\not\in E(M_v)$, we conclude that $x+v\not\in X$, completing the proof.
\end{proof}

\begin{prop}
\label{thm:quotx}
Let $M$ be a matroid and pick $v\in\mathbb{F}_2^{r(M)}$. If $\chi(M_v)=k$, then there exists a codimension-$k$ subspace $H<\mathbb{F}_2^{r(M)}$ with $v\in H$ and $|E(M)\cap H|\leq2^{r(M)-k-1}$.
\end{prop}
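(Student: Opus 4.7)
The plan is to take $H$ itself to be a codimension-$k$ subspace of $\mathbb{F}_2^{r(M)}$ witnessing $\chi(M_v)=k$, after first arguing that such a subspace can be chosen to contain $v$. Once that is done, the size bound will fall out of pairing up elements of $H$ under the involution $x\mapsto x+v$.

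\textbf{Step 1: choosing $H$ with $v\in H$.} By definition of $\chi(M_v)$ there exists at least one codimension-$k$ subspace $K$ of $\mathbb{F}_2^{r(M)}$ disjoint from $E(M_v)$. The key claim is that any such $K$ must actually contain $v$. The reason is that the set $E(M_v)$ is invariant under the translation $x\mapsto x+v$, since the defining condition ``$x, x+v\in E(M)$'' is symmetric in $x$ and $x+v$. Hence if $v\notin K$, the coset $K+v$ is also disjoint from $E(M_v)$ (any $x\in (K+v)\cap E(M_v)$ would give $x+v\in K\cap E(M_v)$, contradicting the choice of $K$), and therefore $K\cup (K+v) = K+\langle v\rangle$ is a subspace of codimension $k-1$ disjoint from $E(M_v)$, contradicting $\chi(M_v)=k$. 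So we may set $H:=K$.

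\textbf{Step 2: bounding $|E(M)\cap H|$.} Assuming $v\neq 0$ (the case $v=0$ is immediate, since then $M_v=M$ and a codimension-$k$ subspace disjoint from $E(M)$ already satisfies $|E(M)\cap H|=0$), the involution $x\mapsto x+v$ is fixed-point-free on $H$ and partitions $H$ into $2^{r(M)-k-1}$ unordered pairs $\{x, x+v\}$. If both elements of a pair lie in $E(M)$, then by definition $x\in E(M_v)\cap H=\emptyset$, which is impossible. So each pair contributes at most one element to $E(M)\cap H$, yielding $|E(M)\cap H|\leq 2^{r(M)-k-1}$.

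The only real content is Step 1: once one notices that $E(M_v)$ is preserved by translation by $v$, the hypothesis $\chi(M_v)=k$ (rather than $\chi(M_v)\leq k$) forces $v$ to lie in every optimal disjoint subspace, since otherwise one could enlarge by $\langle v\rangle$ while preserving disjointness. Step 2 is then a routine pairing argument matched to the symmetry baked into the definition of $M_v$.
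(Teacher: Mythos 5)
Your proof is correct and follows essentially the same route as the paper: choose a codimension-$k$ subspace disjoint from $E(M_v)$, show $v$ must lie in it (otherwise adjoining the coset $H+v$ drops the codimension, contradicting $\chi(M_v)=k$), then use the fixed-point-free pairing $x\mapsto x+v$ to bound $|E(M)\cap H|$. If anything, your Step 1 is slightly more careful than the paper's, since you explicitly justify that $H+v$ is disjoint from $E(M_v)$ via the translation-invariance of $E(M_v)$, a point the paper leaves implicit.
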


\begin{proof}
Let $H$ be a maximal subspace disjoint from $E(M_v)$. By assumption $\codim{H}=k$. Now if $v\not\in H$, then $H$ and $H+v$ are disjoint so $H\cup(H+v)$ is a codimension-$(k-1)$ subspace that is also disjoint from $E(M_v)$. This contradicts the maximality of $H$, so $v\in H$.

Now we claim that $|E(M)\cap H|\leq\frac12|H|$. We know that for any $x\in E(M)\cap H$, it is not the case that $x+v\in E(M)\cap H$ since then $x,x+v\in E(M_v)$. Therefore at most half of the elements of $H$ can be in $E(M)$.
\end{proof}

Combining the properties we derived above with the $t-1$ case of the main theorem and our bounds from Subsection \ref{ssec:hi}, we obtain the following useful properties of any counterexample to our main theorem.

\begin{prop}
\label{thm:quot}
Fix $t\geq3$. Assuming Theorem \ref{thm:main} for $t-1$, say $M$ is a $PG(t-1,2)$-free matroid with $|M|>(1-3\cdot2^{-t})2^{r(M)}$ and $\chi(M)>t$. Set $Y=\mathbb{F}_2^{r(M)}\setminus E(M)$. We have the following:
\begin{enumerate}[(i)]
\item There exist distinct codimension-$(t-1)$ subspaces $H_1$, $H_2$,\ldots,$H_s<\mathbb{F}_2^{r(M)}$ such that $|Y\cap H_i|\geq 2^{r(M)-t}$ for $1\leq i\leq s$ and $E(M)\subset \bigcup_{i=1}^s H_i$. Furthermore, $s>2^t-3$.
\item For all $v\in E(M)$, we have $|Y\cap (Y+v)|<2^{r(M)-t-1}$.
\end{enumerate}
\end{prop}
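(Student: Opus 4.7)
The plan is to apply the $t-1$ case of Theorem~\ref{thm:main} (the induction hypothesis) to the auxiliary matroid $M_v$ for each $v\in E(M)$, and then extract structural conclusions about $M$ itself. For the common setup, I first verify that $M_v$ meets the hypotheses for induction: Proposition~\ref{thm:quotp} combined with the $PG(t-1,2)$-freeness of $M$ forces $M_v$ to be $PG(t-2,2)$-free, while Lemma~\ref{thm:pie} gives $|M_v|>(1-3\cdot 2^{1-t})2^{r(M)}$. The induction hypothesis then yields $\chi(M_v)\in\{t-2,t-1\}$.

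For part (i), I would apply Proposition~\ref{thm:quotx} to $M_v$ to obtain a codimension-$\chi(M_v)$ subspace $H$ with $v\in H$ and $|E(M)\cap H|\leq|H|/2$. If $\chi(M_v)=t-1$, then $H$ itself is a codim-$(t-1)$ subspace with $|Y\cap H|\geq|H|/2=2^{r(M)-t}$, and I take $H_v=H$. If $\chi(M_v)=t-2$, then $H$ has codim $t-2$ and $|Y\cap H|\geq 2^{r(M)-t+1}$; averaging $|Y\cap K|$ over the codimension-$1$ subspaces $K\subset H$ containing $v$ shows that some such $K$ has $|Y\cap K|\geq 2^{r(M)-t}$, and I take $H_v=K$. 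Collecting the distinct subspaces in $\{H_v:v\in E(M)\}$ as $H_1,\ldots,H_s$ gives a cover $E(M)\subset\bigcup_iH_i$ with each $|E(M)\cap H_i|\leq 2^{r(M)-t}$. The union bound $|E(M)|\leq s\cdot 2^{r(M)-t}$, combined with $|E(M)|>(1-3\cdot 2^{-t})2^{r(M)}$, yields $s>2^t-3$.

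For part (ii), fix $v\in E(M)$ and consider again the codim-$\chi(M_v)$ subspace $K$ from Proposition~\ref{thm:quotx}. Since $v\in K$ and $K\cap E(M_v)=\emptyset$, the involution $x\mapsto x+v$ acts freely on $K$, partitioning it into pairs; each pair is either entirely in $Y$ or ``mixed'' (one endpoint in $Y$, one in $E(M)$), and no pair is entirely in $E(M)$. This gives the local identity $|Y\cap(Y+v)\cap K|=2|Y\cap K|-|K|$, and the pairs entirely in $Y$ outside $K$ contribute at most $|Y\setminus K|$ further elements to $Y\cap(Y+v)$. Combining these counts with the concentration bounds on $Y$ from Corollary~\ref{thm:hyperbounds} and the global bound $|Y|<3\cdot 2^{r(M)-t}$ should give the target $|Y\cap(Y+v)|\leq 2^{r(M)-t-1}$.

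The main obstacle is part (ii): a direct combination of the density bound on $M_v$ (from $\chi(M_v)\leq t-1$) with $|Y|<3\cdot 2^{r(M)-t}$ yields only $|Y\cap(Y+v)|=O(2^{r(M)-t})$, whereas the target is tighter by a factor of two. Closing this gap likely requires separate treatment of the cases $\chi(M_v)=t-2$ and $\chi(M_v)=t-1$ together with sharper use of Corollary~\ref{thm:hyperbounds}; in the former case, for small $t$, the codimension of $K$ lies in the regime $\codim K\leq 3$ covered by the corollary and may even yield a contradiction that forces $\chi(M_v)=t-1$, simplifying the analysis.
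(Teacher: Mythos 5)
Your proposal for part~(i) is essentially sound, though it is slightly more complicated than the paper's route. You handle the case $\chi(M_v)=t-2$ by averaging over codimension-$1$ subspaces of the codimension-$(t-2)$ subspace $H$, which does produce a valid codimension-$(t-1)$ subspace; but in fact this case is vacuous. Observe that if $\chi(M_v)=t-2$, Proposition~\ref{thm:quotx} gives a codimension-$(t-2)$ subspace $H$ with $|Y\cap H|\geq 2^{r(M)-t+1}$; for $t\geq 3$ this $H$ sits inside some hyperplane $H'$ with $|Y\cap H'|\geq 2^{r(M)-t+1}=2\cdot 2^{r(M)-t}$, contradicting Corollary~\ref{thm:hyperbounds}~(i). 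Thus $\chi(M_v)=t-1$ always (this is not merely a ``small $t$'' phenomenon as you speculate at the end). This observation is what makes part~(ii) tractable.

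Your part~(ii) has a genuine gap, which you yourself flag. The local identity $|Y\cap(Y+v)\cap K|=2|Y\cap K|-|K|$ is correct, but the term $|Y\cap(Y+v)\setminus K|\leq|Y\setminus K|$ is hopelessly lossy: with $|Y|<3\cdot 2^{r(M)-t}$ and $|Y\cap K|\geq 2^{r(M)-t}$ one gets $|Y\setminus K|<2\cdot 2^{r(M)-t}$, already four times the target $2^{r(M)-t-1}$, and there is no further structural control over the pairs in $Y$ outside $K$. The paper closes the gap by a completely different mechanism: having established $\chi(M_v)=t-1$, it observes that $M_v$ is $PG(t-2,2)$-free with $\chi(M_v)\neq t-2$, so by the contrapositive of Govaerts--Storme (Theorem~\ref{thm:gs} at level $t-1$) we must have $|M_v|\leq\bigl(1-\tfrac{11}{4}\cdot 2^{1-t}\bigr)2^{r(M)}$. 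Feeding this into the identity of Lemma~\ref{thm:pie}, $|Y\cap(Y+v)|=|M_v|-2^{r(M)}+2|Y|$, together with $|Y|<3\cdot 2^{r(M)-t}$, gives exactly $|Y\cap(Y+v)|<2^{r(M)-t-1}$. Without invoking Theorem~\ref{thm:gs} (or some equivalent sharp density statement about $M_v$), your approach cannot reach the required constant.
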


\begin{proof}
By Lemma \ref{thm:pie}, for $v\in E(M)$ we have that $|M_v|>(1-3\cdot2^{1-t})2^{r(M)}$. Then by assumption, either $E(M_v)$ contains $PG(t-2,2)$ or $\chi(M_v)\in\{t-2,t-1\}$. By Proposition \ref{thm:quotp}, the former cannot happen. Therefore, by Proposition \ref{thm:quotx}, there is a subspace $H$ with $v\in H$ and $\codim{H}=\chi(M_v)\in\{t-2,t-1\}$ and $|Y\cap H|\geq2^{r(M)-\chi(M_v)-1}$.

We claim that $\chi(M_v)=t-1$. If not, then $H$ has codimension $t-2$ and $|Y\cap H|\geq 2^{r(M)-t+1}$. Since $t\geq 3$, the subspace $H$ is contained in some hyperplane $H'$ that satisfies $|Y\cap H'|\geq|Y\cap H|\geq2\cdot2^{r(M)-t}$, which contradicts Corollary \ref{thm:hyperbounds} $(i)$.

This implies the first part of $(i)$. For each $v\in E(M)$ we find a corresponding $H$ as above. Let $H_1,H_2,\ldots,H_s$ be the set of distinct $H$'s. Each $H_i$ has the desired properties and since $v\in H$, we see that $E(M)\subset\bigcup_{i=1}^sH_i$.

To bound $s$, note that since $E(M)\subset\bigcup_{i=1}^sH_i$, we have\[\sum_{i=1}^s|E(M)\cap H_i|\geq|M|>(1-3\cdot2^{-t})2^{r(M)}.\]Now since $|E(M)\cap H_i|\leq2^{r(M)-t}$ for all $i$, we conclude that $s>2^t-3$.

We now can deduce $(ii)$ from the above and Theorem \ref{thm:gs}. We know that $\chi(M_v)=t-1$ and $M_v$ is $PG(t-2,2)$-free. Therefore $M_v$ cannot satisfy the hypotheses of Theorem \ref{thm:gs} since this would imply that $\chi(M_v)=t-2$. Thus we have $|M_v|\leq(1-\frac{11}4\cdot2^{1-t})2^{r(M)}$. Applying Lemma \ref{thm:pie}, we conclude that $|Y\cap(Y+v)|<2^{r(M)-t-1}$.
\end{proof}

\subsection{Proof of main theorem}
\label{ssec:main}

Proposition \ref{thm:quot} $(i)$ is almost strong enough to prove the main theorem: it says that any counterexample $M$ must have many small subspaces that each contain more than a third of the elements of $\mathbb{F}_2^{r(M)}\setminus E(M)$.

To finish the proof, we first show that there must be a pair of these subspaces with large intersection.

\begin{prop}
\label{thm:largeint}
Fix $t\geq3$. Assuming Theorem \ref{thm:main} for $t-1$, say $M$ is a $PG(t-1,2)$-free matroid with $|M|>(1-3\cdot2^{-t})2^{r(M)}$ and $\chi(M)>t$. Set $Y=\mathbb{F}_2^{r(M)}\setminus E(M)$. Then there exist distinct codimension-$(t-1)$ subspaces $H_1,H_2<\mathbb{F}_2^{r(M)}$ such that $|Y\cap H_1|,|Y\cap H_2|\geq 2^{r(M)-t}$ and $\codim{H_1\cap H_2}\leq t+2$.
\end{prop}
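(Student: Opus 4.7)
The plan is to combine Proposition \ref{thm:quot}(i) with a double count on $Y$. Setting $n=r(M)$, Proposition \ref{thm:quot}(i) supplies distinct codimension $t-1$ subspaces $H_1,\dots,H_s$ with $s>2^t-3$ and $|Y\cap H_i|\geq 2^{n-t}$ for each $i$, while the density hypothesis $|M|>(1-3\cdot2^{-t})2^n$ translates to $|Y|<3\cdot 2^{n-t}$. I will suppose for contradiction that $\codim(H_i\cap H_j)\geq t+3$---equivalently $|H_i\cap H_j|\leq 2^{n-t-3}$---for every pair $i\ne j$.

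For $y\in Y$ write $d(y)=|\{i:y\in H_i\}|$, and set $D=\sum_{y\in Y}d(y)=\sum_{i=1}^s|Y\cap H_i|\geq s\cdot 2^{n-t}$. I then estimate
\[T=\sum_{1\leq i<j\leq s}|Y\cap H_i\cap H_j|=\sum_{y\in Y}\binom{d(y)}{2}\]
in two ways. The contradiction assumption immediately gives the upper bound $T\leq\binom{s}{2}\cdot 2^{n-t-3}$. On the other side, convexity of $\binom{x}{2}$ (Jensen's inequality) yields
\[T\geq |Y|\binom{D/|Y|}{2}=\frac{D(D-|Y|)}{2|Y|}.\]
Plugging in the bounds $D\geq s\cdot 2^{n-t}$ and $|Y|<3\cdot 2^{n-t}$, and noting that $s>2^t-3\geq 5$ forces $D>|Y|$, the lower bound simplifies to $T\geq s(s-3)\cdot 2^{n-t}/6$.

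Combining the two estimates and cancelling $s\cdot 2^{n-t-4}$ reduces to the linear inequality $16(s-3)\leq 6(s-1)$, i.e.\ $s\leq 21/5$. This contradicts $s\geq 2^t-2\geq 6$ for every $t\geq 3$, completing the proof. The argument is essentially a quantitative ``most pairs $H_i,H_j$ share many points of $Y$'' statement, so I expect no real obstacle beyond the bookkeeping in the Jensen step and tracking which inequalities go in which direction; in particular, Proposition \ref{thm:quot}(ii) plays no role here and presumably enters only in the following subsection, where Proposition \ref{thm:largeint} is combined with the bound $|Y\cap(Y+v)|\leq 2^{n-t-1}$ to finish the main theorem.
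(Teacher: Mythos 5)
Your proof is correct and its underlying idea is the same as the paper's (the covering family from Proposition~\ref{thm:quot}(i) cannot have small pairwise intersections because each $H_i$ captures at least a third of $Y$), but the technique is a genuine variant worth noting. The paper fixes just four of the $H_i$'s and applies the Bonferroni/inclusion--exclusion inequality
\[|Y|\geq\sum_{1\leq i\leq 4}|Y\cap H_i|-\sum_{1\leq i<j\leq 4}|Y\cap H_i\cap H_j|,\]
getting $3\cdot 2^{r(M)-t}>\tfrac{13}{4}\cdot 2^{r(M)-t}$ directly. You instead count pair-incidences over all $s$ subspaces and invoke Jensen on $\binom{d(y)}{2}$; after plugging in $D\geq s\cdot 2^{n-t}$, $|Y|<3\cdot 2^{n-t}$, and $|Y\cap H_i\cap H_j|\leq 2^{n-t-3}$, you arrive at $s<21/5$, contradicting $s\geq 2^t-2\geq 6$. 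Both arguments are sound; the paper's is slightly more economical since it avoids Jensen and only needs four subspaces, while yours exploits the full strength of $s>2^t-3$ and would degrade more gracefully if the per-intersection bound $2^{n-t-3}$ were weakened. One small presentational caution: in the Jensen step the quantity $D(D-|Y|)/(2|Y|)$ is increasing in $D$ and decreasing in $|Y|$ (given $D>|Y|$, which holds since $s\geq 6$), so it is worth saying explicitly that this monotonicity is what licenses substituting the extremal values $D\to s\cdot 2^{n-t}$ and $|Y|\to 3\cdot 2^{n-t}$ simultaneously.
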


\begin{proof}
Applying Proposition \ref{thm:quot} $(i)$, we choose codimension-$(t-1)$ subspaces $H_1,H_2,\ldots,H_s$ with $|Y\cap H_i|\geq2^{r(M)-t}$ for each $1\leq i\leq s$. Furthermore, we have that $s\geq 4$ for $t\geq 3$.

Now we count\[|Y|\geq\sum_{1\leq i\leq4}|Y\cap H_i|-\sum_{1\leq i<j\leq 4}|Y\cap H_i\cap H_j|.\]If we assume for the sake of contradiction that $\codim{H_i\cap H_j}>t+2$ for each $1\leq i<j\leq4$, we have $|Y\cap H_i\cap H_j|\leq |H_i\cap H_j|\leq 2^{r(M)-t-3}$. This implies\[3\cdot2^{r(M)-t}>|Y|\geq4\cdot2^{r(M)-t}-6\cdot2^{r(M)-t-3}=\frac{13}42^{r(M)-t},\]a contradiction.
\end{proof}

To complete the proof, we use Proposition \ref{thm:quot} $(ii)$ to show that the intersection of these subspaces cannot contain too many elements of $\mathbb{F}_2^{r(M)}\setminus E(M)$. Finally, we show that this information contradicts the bounds that were derived in Subsection \ref{ssec:hi}.

\begin{prop}
\label{thm:tbound}
Fix $t\geq3$. Assuming Theorem \ref{thm:main} for $t-1$, say $M$ is a $PG(t-1,2)$-free matroid with $|M|>(1-3\cdot2^{-t})2^{r(M)}$ and $\chi(M)>t$. Set $Y=\mathbb{F}_2^{r(M)}\setminus E(M)$. For any codimension-$t$ subspace $H<\mathbb{F}_2^{r(M)}$, we have\[|Y\cap H|<\frac342^{r(M)-t}.\]
\end{prop}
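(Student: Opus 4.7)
The plan is to argue by contradiction: suppose some codimension $t$ subspace $H$ has $|Y\cap H|\geq\tfrac34\cdot 2^{r(M)-t}$. The strategy is to locate a nonzero $v\in E(M)$ lying inside $H$, and then exploit the fact that the translate $(Y\cap H)+v$ stays within $H$ to force $|Y\cap(Y+v)|$ to be too large, contradicting Proposition \ref{thm:quot} (ii).

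First I would produce a nonzero $v\in H\cap E(M)$. Since $\chi(M)>t$, no codimension $t$ subspace can be disjoint from $E(M)$; otherwise such a subspace would witness $\chi(M)\leq t$. Hence $H\cap E(M)\neq\emptyset$ and any nonzero element of this intersection will do.

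Next I would apply inclusion--exclusion inside $H$. Writing $A=Y\cap H$, the key point is that $v\in H$ forces $A+v\subset H$, so $|A\cup(A+v)|\leq|H|=2^{r(M)-t}$ and therefore
\[|A\cap(A+v)|=2|A|-|A\cup(A+v)|\geq 2|A|-2^{r(M)-t}\geq 2^{r(M)-t-1},\]
where the last inequality uses the contradiction hypothesis. Since $A\subset Y$, we have $A\cap(A+v)\subset Y\cap(Y+v)$, giving $|Y\cap(Y+v)|\geq 2^{r(M)-t-1}$. But the proof of Proposition \ref{thm:quot} (ii) in fact yields the strict inequality $|Y\cap(Y+v)|<2^{r(M)-t-1}$, since the strict density hypothesis $|M|>(1-3\cdot 2^{-t})2^{r(M)}$ propagates through Lemma \ref{thm:pie} to a strict bound. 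That is the desired contradiction.

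I do not anticipate any serious obstacle beyond the minor strict-vs-non-strict bookkeeping on Proposition \ref{thm:quot} (ii). The real content of the argument is the observation that one should seek $v$ inside $H$ rather than use an arbitrary edge: it is precisely the containment $A+v\subset H$ that makes the inclusion--exclusion bound tight. Note how the exponent $t+1$ in Proposition \ref{thm:quot} (ii) matches exactly with the codimension-$t$ hypothesis to produce the constant $\tfrac34$, since solving $2|A|-|H|\leq 2^{r(M)-t-1}$ for $|A|$ yields $|A|\leq\tfrac34\cdot 2^{r(M)-t}$.
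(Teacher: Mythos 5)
Your proposal is correct and takes essentially the same approach as the paper: both locate an edge $v\in E(M)\cap H$ (using $\chi(M)>t$), observe that $(Y\cap H)+v\subset H$, apply inclusion--exclusion within $H$, and invoke Proposition \ref{thm:quot}~(ii) to finish. The only cosmetic difference is that you phrase it as a contradiction while the paper argues directly, and your note about the strict inequality in Proposition \ref{thm:quot}~(ii) correctly matches what the paper's proof of that proposition actually establishes and uses.
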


\begin{proof}
Since $\chi(M)>t$, we know that there must be some $v\in E(M)\cap H$. Now note that $(Y\cap H)+v \subset H$. Therefore, we have
\begin{align*}
|H|&\geq|(Y\cap H)\cup((Y\cap H)+v)|\\
&=2|Y\cap H|-|Y\cap(Y+v)\cap H|\\
&\geq2|Y\cap H|-|Y\cap(Y+v)|.
\end{align*}
Now $|H|=2^{r(M)-t}$ and $|Y\cap(Y+v)|<2^{r(M)-t-1}$ by Proposition \ref{thm:quot} $(ii)$, so the desired inequality follows.
\end{proof}

\begin{thm}
\label{thm:main6}
Fix $t\geq6$. Assuming Theorem \ref{thm:main} for $t-1$, then there exist no $PG(t-1,2)$-free matroids $M$ with $|M|>(1-3\cdot2^{-t})2^{r(M)}$ and $\chi(M)>t$.
\end{thm}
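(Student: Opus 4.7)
The plan is to argue by contradiction, combining the propositions already established in Subsections \ref{ssec:hi}--\ref{ssec:main}, which have done essentially all of the heavy lifting; the remaining task is a short case analysis. Suppose $M$ is such a counterexample and set $Y=\mathbb{F}_2^{r(M)}\setminus E(M)$.

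First I would invoke Proposition \ref{thm:largeint} to pick distinct codimension $t-1$ subspaces $H_1,H_2$ with $|Y\cap H_i|\geq 2^{r(M)-t}$ and $\codim(H_1\cap H_2)\leq t+2$. Writing $K=H_1\cap H_2$ and $J=H_1+H_2$, we necessarily have $\codim K\in\{t,t+1,t+2\}$ (since $H_1\neq H_2$ are distinct codim-$(t-1)$ subspaces), and so $\codim J=2(t-1)-\codim K\in\{t-2,t-3,t-4\}$. The key structural point is that for $t\geq 6$ we have $\codim J\geq 2$ in every case, so Corollary \ref{thm:hyperbounds} yields a useful upper bound on $|Y\cap J|$ (by passing to a codimension-$3$ superspace of $J$ when $\codim J\geq 3$, else using $J$ itself).

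The contradiction will come from comparing the inclusion-exclusion lower bound
\[|Y\cap J|\geq |Y\cap H_1|+|Y\cap H_2|-|Y\cap K|\geq 2\cdot 2^{r(M)-t}-|Y\cap K|\]
with the upper bound from Corollary \ref{thm:hyperbounds}. For $|Y\cap K|$ I would use Proposition \ref{thm:tbound} when $\codim K=t$ (yielding $<\tfrac{3}{4}\cdot 2^{r(M)-t}$) and the trivial bound $|Y\cap K|\leq |K|$ when $\codim K\in\{t+1,t+2\}$ (yielding $\leq \tfrac{1}{2}\cdot 2^{r(M)-t}$ or $\leq \tfrac{1}{4}\cdot 2^{r(M)-t}$, respectively). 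Plugging these into the three cases gives lower bounds of $>\tfrac{5}{4}$, $>\tfrac{3}{2}$, and $>\tfrac{7}{4}$ times $2^{r(M)-t}$, while Corollary \ref{thm:hyperbounds}(ii) or (iii) caps $|Y\cap J|$ at $<\tfrac{5}{4}\cdot 2^{r(M)-t}$ in the first two cases and at $<\tfrac{3}{2}\cdot 2^{r(M)-t}$ in the third (since $\codim J$ may be only $2$ when $t=6$). All three cases are contradictions.

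The main obstacle, and precisely the reason the argument requires $t\geq 6$, is the worst case $\codim K=t+2$: here $\codim J=t-4$, so one needs $t\geq 6$ merely to have $\codim J\geq 2$ and apply Corollary \ref{thm:hyperbounds} at all. For $t=5$ this same case yields $\codim J=1$, where the available bound $<2\cdot 2^{r(M)-t}$ is too weak to contradict the lower bound $>\tfrac{7}{4}\cdot 2^{r(M)-t}$, which is presumably why Section \ref{sec:5} has to proceed through a modified route rather than applying the present argument directly.
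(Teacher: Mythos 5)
Your proposal is correct and follows essentially the same route as the paper: invoke Proposition \ref{thm:largeint}, split into the three cases $\codim(H_1\cap H_2)\in\{t,t+1,t+2\}$, lower-bound $|Y\cap(H_1+H_2)|$ by inclusion-exclusion together with Proposition \ref{thm:tbound} (respectively the trivial bound $|K|$), and contradict the relevant part of Corollary \ref{thm:hyperbounds}. The only cosmetic difference is that you sometimes invoke part (iii) where the paper is content with part (ii); both work, and your identification of the $\codim K=t+2$ case as the bottleneck forcing $t\geq 6$ matches the paper's reasoning.
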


\begin{proof}
Suppose for the sake of contradiction that $M$ is a counterexample. Set $Y=\mathbb{F}_2^{r(M)}\setminus E(M)$. By Proposition \ref{thm:largeint}, we can find distinct codimension-$(t-1)$ subspaces $H_1,H_2<\mathbb{F}_2^{r(M)}$ such that $t\leq\codim{H_1\cap H_2}\leq t+2$ and $|Y\cap H_1|,|Y\cap H_2|\geq2^{r(M)-t}$. We have three cases.

\textbf{Case 1:} $\codim{H_1\cap H_2}=t$

Let $H_1+H_2$ refer to the smallest subspace of $\mathbb{F}_2^{r(M)}$ containing both $H_1$ and $H_2$. We know that $\codim{H_1}+\codim{H_2}=\codim{H_1\cap H_2}+\codim{H_1+H_2}$ which implies that $\codim{H_1+H_2}=t-2$. Then
\begin{align*}
|Y\cap(H_1+H_2)|&\geq|Y\cap(H_1\cup H_2)|\\
&=|Y\cap H_1|+|Y\cap H_2|-|Y\cap(H_1\cap H_2)|\\
&>\frac54 2^{r(M)-t},
\end{align*}
where the last line follows from Proposition \ref{thm:tbound}.

If $t\geq 5$, this is a contradiction since $H_1+H_2$ has codimension $t-2\geq 3$. Therefore $H_1+H_2$ is contained in some codimension-3 subspace of $\mathbb{F}_2^{r(M)}$. However, Corollary \ref{thm:hyperbounds} $(iii)$ says that any codimension-3 subspace of $\mathbb{F}_2^{r(M)}$ contains strictly fewer than $\frac542^{r(M)-t}$ elements of $Y$.

\textbf{Case 2:} $\codim{H_1\cap H_2}=t+1$

In this case we have $\codim{H_1+H_2}=t-3$ and 
\begin{align*}
|Y\cap(H_1+H_2)|&\geq |Y\cap H_1|+|Y\cap H_2|-|Y\cap(H_1\cap H_2)|\\
&\geq|Y\cap H_1|+|Y\cap H_2|-|H_1\cap H_2|\\
&\geq\frac322^{r(M)-t}.
\end{align*}

Again for $t\geq 5$ we have that $H_1+H_2$ is contained in a codimension-2 subspace of $\mathbb{F}_2^{r(M)}$, which contradicts Corollary \ref{thm:hyperbounds} $(ii)$.

\textbf{Case 3:} $\codim{H_1\cap H_2}=t+2$

In this case, $\codim{H_1+H_2}=t-4$ and $|Y\cap(H_1+H_2)|\geq\frac742^{r(M)-t}$. This contradicts Corollary \ref{thm:hyperbounds} $(ii)$ for $t\geq 6$ since then $H_1+H_2$ is contained in a codimension-2 subspace.
\end{proof}

As noted in Remark \ref{thm:rmk}, showing that such a matroid $M$ has $\chi(M)\leq t$ is sufficient to show that $\chi(M)\in\{t-1,t\}$. Thus we have proved the desired result assuming the $t=3,4,5$ cases.

\section{Proof of main theorem for $t=5$}
\label{sec:5}

In this section we extend the results of the previous section to the $t=5$ case. Theorem \ref{thm:main6} does not apply here since case 3 of that proof requires that $t\geq 6$. However, we can still use that proof to gives us some information in the $t=5$ case.

\begin{cor}
\label{thm:int7}
Assuming Theorem \ref{thm:main} for $t=4$, say $M$ is a $PG(4,2)$-free matroid with $|M|>(1-3\cdot2^{-5})2^{r(M)}$ and $\chi(M)>5$. Set $Y=\mathbb{F}_2^{r(M)}\setminus E(M)$. If $H_1, H_2<\mathbb{F}_2^{r(M)}$ are distinct codimension-4 subspaces satisfying $|Y\cap H_1|,|Y\cap H_2|\geq 2^{r(M)-5}$, then $\codim{H_1\cap H_2}\in\{7,8\}$.
\end{cor}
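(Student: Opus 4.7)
The plan is to reuse the case analysis from the proof of Theorem \ref{thm:main6}, specialized to $t=5$. Since $H_1 \neq H_2$ are both codimension 4 subspaces, $\codim H_1 \cap H_2 \in \{5,6,7,8\}$, so it suffices to rule out codimensions 5 and 6. These correspond exactly to Cases 1 and 2 of the earlier proof, and both of those cases already gave contradictions for any $t \geq 5$; the only case that failed at $t=5$ was Case 3, which is precisely why $\{7,8\}$ remains as the allowed set.

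For $\codim H_1 \cap H_2 = 5$ (Case 1): because $H_1 \cap H_2$ has codimension exactly $t=5$, Proposition \ref{thm:tbound} gives $|Y \cap (H_1 \cap H_2)| < \frac{3}{4}\cdot 2^{r(M)-5}$. Combining with $|Y \cap H_i| \geq 2^{r(M)-5}$ and inclusion-exclusion yields $|Y \cap (H_1 + H_2)| > \frac{5}{4}\cdot 2^{r(M)-5}$ on the codimension $3$ subspace $H_1 + H_2$, contradicting Corollary \ref{thm:hyperbounds}(iii). For $\codim H_1 \cap H_2 = 6$ (Case 2): the trivial bound $|Y \cap (H_1 \cap H_2)| \leq |H_1 \cap H_2| = 2^{r(M)-6}$ and inclusion-exclusion give $|Y \cap (H_1 + H_2)| \geq \frac{3}{2}\cdot 2^{r(M)-5}$ on the codimension $2$ subspace $H_1 + H_2$, contradicting Corollary \ref{thm:hyperbounds}(ii).

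There is no real obstacle in proving the corollary itself, since it is essentially a bookkeeping restatement of which cases of the earlier proof survived at $t=5$. What the corollary does \emph{not} rule out, and what I expect to be the genuine obstacle in the rest of Section \ref{sec:5}, is the Case 3 scenario $\codim H_1 \cap H_2 = 7$: there $H_1 + H_2$ has codimension $1$, so Corollary \ref{thm:hyperbounds}(i) only supplies $|Y \cap H| < 2 \cdot 2^{r(M)-5}$, which is not violated by the Case 3 lower bound $\frac{7}{4} \cdot 2^{r(M)-5}$. Handling that residual case is what forces the additional work in Section \ref{sec:5} beyond this corollary.
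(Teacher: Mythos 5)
Your proof is correct and takes essentially the same approach as the paper, which simply references the case analysis from the proof of Theorem \ref{thm:main6} (Cases 1 and 2 giving contradictions for $t=5$, Case 3 not) without re-running the arithmetic; you have just spelled out the same computation explicitly.
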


\begin{proof}
For $H_1,H_2$ of codimension 4 and distinct, we have that $5\leq\codim{H_1\cap H_2}\leq 8$. As noted above, the proof of Theorem \ref{thm:main6}  rules out cases 1 and 2 even for $t=5$. The $t\geq 6$ constraint is only used in case 3. Thus we conclude that $\codim{H_1\cap H_2}\neq 5,6$, giving the desired result.
\end{proof}

We now prove a strengthened version of Proposition \ref{thm:largeint}.

\begin{prop}
\label{thm:largeints}
Assuming Theorem \ref{thm:main} for $t=4$, say $M$ is a $PG(4,2)$-free matroid with $|M|>(1-3\cdot2^{-5})2^{r(M)}$ and $\chi(M)>5$. Set $Y=\mathbb{F}_2^{r(M)}\setminus E(M)$. If $H_1,H_2,H_3,H_4<\mathbb{F}_2^{r(M)}$ are pairwise distinct codimension-4 subspaces satisfying $|Y\cap H_i|\geq 2^{r(M)-5}$ for $1\leq i\leq 4$, then there exist three pairs $1\leq i<j\leq 4$ with $\codim{H_i\cap H_j}=7$.
\end{prop}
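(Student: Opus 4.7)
The plan is to apply the Bonferroni inclusion-exclusion inequality to the four sets $Y\cap H_1,\ldots,Y\cap H_4$ and combine it with the restriction on pairwise codimensions coming from Corollary \ref{thm:int7}. Concretely, I would write
\[|Y|\;\geq\;\left|\bigcup_{i=1}^{4}(Y\cap H_i)\right|\;\geq\;\sum_{i=1}^{4}|Y\cap H_i|\;-\sum_{1\leq i<j\leq 4}|Y\cap H_i\cap H_j|,\]
using $|Y\cap H_i|\geq 2^{r(M)-5}$ on each singleton term and the trivial bound $|Y\cap H_i\cap H_j|\leq|H_i\cap H_j|$ on each pairwise term, together with the density upper bound $|Y|<3\cdot 2^{r(M)-5}$.

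By Corollary \ref{thm:int7}, each of the six pairs $(H_i,H_j)$ satisfies $\codim H_i\cap H_j\in\{7,8\}$. Let $a$ be the number of pairs with $\codim H_i\cap H_j=7$ and $b=6-a$ the number with $\codim H_i\cap H_j=8$. The size bound gives $|Y\cap H_i\cap H_j|\leq\tfrac14\cdot 2^{r(M)-5}$ in the first case and $\leq\tfrac18\cdot 2^{r(M)-5}$ in the second. Substituting into the Bonferroni bound yields
\[|Y|\;\geq\;2^{r(M)-5}\!\left(4-\tfrac{a}{4}-\tfrac{b}{8}\right)\;=\;2^{r(M)-5}\!\left(4-\tfrac{2a+b}{8}\right),\]
and then $|Y|<3\cdot 2^{r(M)-5}$ forces $2a+b>8$. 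Combined with $a+b=6$ this gives $a>2$, so $a\geq 3$, which is exactly the claim that three of the pairs have codimension $7$ intersection.

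I do not anticipate any real obstacle: the whole argument is a short pigeonhole once Corollary \ref{thm:int7} has been applied to all six pairs. The slight strengthening over Proposition \ref{thm:largeint} comes entirely from the fact that Corollary \ref{thm:int7} has already eliminated pairwise codimensions $5$ and $6$ in the $t=5$ setting; this sharpens the weights in the inclusion-exclusion sum to exactly the values needed to drive $a$ above $2$.
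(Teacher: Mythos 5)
Your argument is correct and is essentially the paper's own proof: both apply the truncated inclusion-exclusion (Bonferroni) bound to the four sets $Y\cap H_i$, invoke Corollary \ref{thm:int7} to restrict every pairwise intersection to codimension $7$ or $8$, and then use $|Y|<3\cdot 2^{r(M)-5}$ and $|Y\cap H_i|\geq 2^{r(M)-5}$ to force at least three pairs of codimension $7$. The only difference is presentational — you parametrize by $a$ and $b$ and deduce $a\geq 3$ directly, whereas the paper assumes $a\leq 2$ for contradiction and plugs in the extremal case $a=2$, $b=4$.
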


\begin{proof}
This follows from the same argument used to prove Proposition \ref{thm:largeint}. By Corollary \ref{thm:int7}, all of the intersections have codimension 7 or 8. Suppose for the sake of contradiction that at most two of the intersections have codimension 7. Then we have
\begin{align*}
3\cdot2^{r(M)-5}>|Y|&\geq\sum_{1\leq i\leq4}|Y\cap H_i|-\sum_{1\leq i<j\leq 4}|Y\cap H_i\cap H_j|\\
&\geq4\cdot2^{r(M)-5}-4\cdot2^{r(M)-8}-2\cdot2^{r(M)-7}\\
&=3\cdot2^{r(M)-5},
\end{align*}
which is a contradiction.
\end{proof}

To complete the proof, we need the following graph-theoretic lemma.

\begin{lem}
\label{thm:graph}
Let $G$ be a graph on 30 vertices such that any four vertices of $G$ have at least three edges among them. Then there exists a vertex of $G$ with degree at least 16.
\end{lem}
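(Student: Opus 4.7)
The plan is to argue by contradiction, passing to the complement graph $\bar G$. Under the negation of the conclusion, every vertex of $\bar G$ has degree at least $14$, while the hypothesis translates to the statement that every $4$ vertices of $\bar G$ span at most $3$ edges. From these two properties I will derive a counting contradiction.

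First I would extract two structural properties of $\bar G$. If $u,v,w$ formed a triangle in $\bar G$, then any fourth vertex would already sit in a $4$-set with $3$ edges and so could not be joined to any of $u,v,w$; this would force $\deg_{\bar G}(u),\deg_{\bar G}(v),\deg_{\bar G}(w)\leq 2$, contradicting the minimum degree bound. So $\bar G$ is triangle-free. Next, if two non-adjacent vertices $u,w$ had two common neighbors $v_1,v_2$, then $\{u,w,v_1,v_2\}$ would contain the four edges $uv_1,uv_2,wv_1,wv_2$, again violating the $3$-edge bound. Combined with triangle-freeness, this shows that any two vertices of $\bar G$ share at most one common neighbor.

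To finish I would double-count cherries (length-$2$ paths) in $\bar G$. Summing by the apex gives
\[\sum_{v\in V(\bar G)}\binom{d_{\bar G}(v)}{2}\geq 30\binom{14}{2}=2730,\]
while summing by the endpoints gives at most $\binom{30}{2}=435$, since each pair of vertices serves as the endpoints of at most one cherry. Since $2730>435$, this is a contradiction.

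The only genuine step is the structural extraction in $\bar G$: realizing that the $4$-vertex edge condition, together with a large minimum degree in the complement, simultaneously rules out triangles and multiple common neighbors. Once those are in hand, the proof ends with a one-line convexity count, so I do not expect a serious obstacle beyond choosing the right formulation.
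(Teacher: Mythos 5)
Your proof is correct, and it takes a genuinely different route from the paper. The paper argues directly in $G$: assuming some vertex $v$ has degree $\delta<16$, the $4$-vertex condition applied to $v$ and three of its non-neighbors forces the non-neighbors to form a clique, so $G$ contains a $K_{14}$; the paper then partitions the $16$ vertices outside this clique into $8$ pairs and shows each pair must send at least one edge to a fixed edge $\{v_1,v_2\}$ of the clique, producing a vertex of degree at least $17$. Your proof instead passes to the complement and extracts two structural facts ($\bar G$ is triangle-free, and any two vertices share at most one common neighbor) before closing with a standard double count of cherries, $\sum_v\binom{d_{\bar G}(v)}{2}\leq\binom{30}{2}$. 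Your argument is arguably cleaner and more conceptual, and it has a lot of slack ($2730$ versus $435$), so it would also prove stronger statements; the paper's argument is more ad hoc and tightly fitted to the numbers $30=14+2\cdot 8$, but it is elementary and self-contained in $G$ without needing the complement reformulation. Both are valid and of similar length.
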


\begin{proof}
Say vertex $v$ has degree $\delta$. If $\delta\geq 16$ we are done. Otherwise, note that $G$ contains a copy of the complete graph $K_{29-\delta}$. This is because for any three vertices $u_1,u_2,u_3$ that are not neighbors of $v$, all of $u_1,u_2,u_3$ must be pairwise connected.

In particular, $G$ contains a $K_{14}$. Pick two vertices $v_1,v_2$ in the $K_{14}$. Label the vertices not in our $K_{14}$ by $x_1,x_2,\ldots,x_8$ and $y_1,y_2,\ldots,y_8$. Considering the set of vertices $x_i,y_i,v_1,v_2$ for $1\leq i\leq 8$, there must be at least one edge between $\{x_i,y_i\}$ and $\{v_1,v_2\}$. Thus there are at least 8 edges from $\{v_1,v_2\}$ to outside of the $K_{14}$. Then one of $v_1,v_2$ is connected to at least 4 vertices outside of the $K_{14}$ and thus has degree at least 17.
\end{proof}

\begin{thm}
Assuming Theorem \ref{thm:main} for $t=4$, then there exist no $PG(4,2)$-free matroids $M$ with $|M|>(1-3\cdot2^{-5})2^{r(M)}$ and $\chi(M)>5$.
\end{thm}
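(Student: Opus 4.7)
The plan is to combine Proposition~\ref{thm:quot}(i), Proposition~\ref{thm:largeints}, and Lemma~\ref{thm:graph} to pin down three of the large codimension-$4$ subspaces inside a single hyperplane, and then to blow this apart by inclusion-exclusion against the hyperplane bound Corollary~\ref{thm:hyperbounds}(i). Suppose for contradiction that $M$ is a counterexample and set $Y = \mathbb{F}_2^{r(M)} \setminus E(M)$. Proposition~\ref{thm:quot}(i) with $t=5$ supplies at least $30$ distinct codimension-$4$ subspaces $H_1,\dots,H_s$ with $|Y \cap H_i| \geq 2^{r(M)-5}$ for every $i$.

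Next I would build the auxiliary graph $G$ on $\{H_1,\dots,H_{30}\}$ whose edges are the pairs $\{H_i,H_j\}$ with $\codim(H_i \cap H_j) = 7$. Proposition~\ref{thm:largeints} verifies that every $4$-subset of the vertices spans at least three edges, so Lemma~\ref{thm:graph} produces a vertex $K$ of degree at least $16$.

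The central step is a pigeonhole that is numerically tight. For every neighbor $H$ of $K$ in $G$, the sum $K+H$ has codimension $8-7=1$, so it is a hyperplane containing $K$. Since $\mathbb{F}_2^{r(M)}/K \cong \mathbb{F}_2^4$, there are exactly $2^4 - 1 = 15$ hyperplanes through $K$. Pigeonholing the $16$ neighbors of $K$ into these $15$ hyperplanes, some hyperplane $L$ contains $K$ together with two distinct neighbors $H$ and $H'$. Because $H + H' \subset L$ and $\codim(H+H') \in \{0,1\}$ by Corollary~\ref{thm:int7}, we must have $H + H' = L$, and hence $\codim(H \cap H') = 7$ as well, so all three pairwise intersections among $K, H, H'$ have codimension $7$.

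Finally I would close out by inclusion-exclusion in $L$: each of $|Y \cap K|$, $|Y \cap H|$, $|Y \cap H'|$ is at least $2^{r(M)-5}$, while each of the three pairwise intersections has size at most $2^{r(M)-7}$, so
\[|Y \cap L| \geq |Y \cap (K \cup H \cup H')| \geq 3 \cdot 2^{r(M)-5} - 3 \cdot 2^{r(M)-7} = \tfrac{9}{4} \cdot 2^{r(M)-5},\]
which contradicts Corollary~\ref{thm:hyperbounds}(i)'s bound $|Y \cap L| < 2 \cdot 2^{r(M)-5}$. The main obstacle is the pigeonhole: the bound $16$ from Lemma~\ref{thm:graph} barely exceeds the count $15 = 2^4 - 1$ of hyperplanes through a codim-$4$ subspace, and the whole argument hinges on this single unit of slack together with the $9/4$ versus $2$ margin in the final inclusion-exclusion.
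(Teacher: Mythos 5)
Your proposal is correct and takes essentially the same route as the paper. The only difference is cosmetic: you phrase the pigeonhole in the primal (the $16$ neighbors of $K$ give hyperplanes $K+H$ through $K$, of which there are only $15$), while the paper dualizes (the orthogonal complements $H_i^\perp$ each meet $H_{17}^\perp$ in a line, and $H_{17}^\perp$ has only $15$ nonzero vectors); these are the same count, and the final inclusion--exclusion against Corollary~\ref{thm:hyperbounds}(i) is identical.
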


\begin{proof}
Suppose for the sake of contradiction that $M$ is a counterexample. Let $Y$ refer to $\mathbb{F}_2^{r(M)}\setminus E(M)$. By Proposition \ref{thm:quot} $(i)$, we can find codimension-4 subspaces $H_1,H_2,\ldots,H_{30}$ such that $|Y\cap H_i|\geq2^{r(M)-5}$ for $1\leq i\leq 30$. Now Proposition \ref{thm:largeints} and Lemma \ref{thm:graph} imply that we can order the $H_i$'s such that $\codim{H_i\cap H_{17}}=7$ for $1\leq i\leq 16$. Note that since $\codim{H_i}=\codim{H_{17}}=4$ and $\codim{H_i\cap H_{17}}=7$, we see that $\codim{H_i+H_{17}}=1$, that is, $H_i+H_{17}$ is a hyperplane for $1\leq i\leq 16$.

We claim that we can find some $1\leq i<j\leq 16$ such that $H_i+H_j+H_{17}$ is a hyperplane. Consider $H_1^{\perp},H_2^{\perp},\ldots,H_{17}^{\perp}$. These are dimension 4 subspaces of $\mathbb{F}_2^{r(M)}$. We know that $H_{17}^{\perp}$ intersects each $H_i^{\perp}$ non-trivially, say at $\{0,v_i\}$.

Since $v_i\in H_{17}^{\perp}\setminus\{0\}$ and $|H_{17}^{\perp}\setminus\{0\}|=15$, by the pigeonhole principle there exist some $1\leq i<j\leq 16$ with $v_i=v_j$. Thus $H_i^{\perp}\cap H_j^{\perp}\cap H_{17}^{\perp}\neq\{0\}$, or equivalently, $H_i+H_j+H_{17}\neq\mathbb{F}_2^{r(M)}$, as desired.

Now note that $\codim{H_i\cap H_j}=7$ since $H_i+H_j\leq H_i+H_j+H_{17}\lneq\mathbb{F}_2^{r(M)}$. Finally, we have
\begin{align*}
|Y\cap(H_i+H_j+H_{17})|&\geq|Y\cap H_i|+|Y\cap H_j|+|Y\cap H_{17}|\\
&\quad-|Y\cap H_i\cap H_j|-|Y\cap H_i\cap H_{17}|-|Y\cap H_j\cap H_{17}|\\
&\geq3\cdot2^{r(M)-5}-|H_i\cap H_j|-|H_i\cap H_{17}|-|H_j\cap H_{17}|\\
&\geq3\cdot2^{r(M)-5}-3\cdot2^{r(M)-7}\\
&=\frac94\cdot2^{r(M)-5}.
\end{align*}
As $H_i+H_j+H_{17}$ is a hyperplane, this contradicts Corollary \ref{thm:hyperbounds} $(i)$.
\end{proof}

\section{Proof of main theorem for $t=3,4$}
\label{sec:34}

To finish the last two cases of the main theorem, we turn to Fourier-analytic techniques. Together with some bounds from Section \ref{sec:6} and below, standard techniques from Fourier analysis imply the $t=3$ case of the main theorem. With some more work, we use similar ideas to prove the $t=4$ case.

We start by using the classical Bose-Burton theorem to derive a bound complementary to Proposition \ref{thm:hyper}.

\begin{thm}[Bose-Burton, \cite{bb}]
\label{thm:bb}
Fix $t\geq 2$. Let $M$ be a matroid. If $|M|>(1-2\cdot2^{-t})2^{r(M)}$, then $M$ contains $PG(t-1,2)$.
\end{thm}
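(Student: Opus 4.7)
The plan is to build a $t$-dimensional subspace $V$ of $\mathbb{F}_2^{r(M)}$ with $V\setminus\{0\}\subset E(M)$ by a simple greedy construction. Setting $Y=\mathbb{F}_2^{r(M)}\setminus E(M)$, the hypothesis rewrites as $|Y|<2\cdot 2^{r(M)-t}=2^{r(M)-t+1}$, and the goal becomes finding $V$ of dimension $t$ with $V\cap Y=\{0\}$; then $V\setminus\{0\}$ is the desired copy of $PG(t-1,2)$.

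First I would initialize $V_0=\{0\}$, which trivially satisfies $V_0\cap Y=\{0\}$ since $0\in Y$. For the inductive step, assume I have built an $i$-dimensional subspace $V_i$ with $V_i\cap Y=\{0\}$, and try to pick $v_{i+1}$ so that $V_{i+1}:=V_i+\langle v_{i+1}\rangle$ still satisfies $V_{i+1}\cap Y=\{0\}$. Since $V_{i+1}=V_i\cup(V_i+v_{i+1})$, the invariant is preserved precisely when the new coset $V_i+v_{i+1}$ is disjoint from $Y$, i.e.\ $v_{i+1}\notin V_i+Y$. Because $0\in Y$ forces $V_i\subset V_i+Y$, this condition automatically rules out $v_{i+1}\in V_i$, so the dimension does jump by one. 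The sumset bound gives $|V_i+Y|\leq|V_i|\cdot|Y|=2^i|Y|$, and for any $i\leq t-1$ this is at most $2^{t-1}|Y|<2^{t-1}\cdot 2^{r(M)-t+1}=2^{r(M)}$, so the complement of $V_i+Y$ in $\mathbb{F}_2^{r(M)}$ is nonempty and a valid $v_{i+1}$ exists. Iterating for $i=0,1,\ldots,t-1$ produces $V_t$, completing the proof.

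There is no real obstacle here; the argument is essentially one line of sumset bookkeeping. The one thing worth noting is that the constant $2$ in the density hypothesis $|M|>(1-2\cdot 2^{-t})2^{r(M)}$ is exactly what is required to make the estimate $2^{t-1}|Y|<2^{r(M)}$ hold at the final step $i=t-1$, which is why the theorem is sharp.
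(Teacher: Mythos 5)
The paper invokes Bose--Burton as a black-box classical result and gives no proof of its own, so there is nothing in the source to compare against. Your proof is correct and self-contained: the greedy construction of a subspace $V_t$ disjoint from $Y\setminus\{0\}$, using the sumset estimate $|V_i+Y|\leq 2^i|Y|$ and the observation that $0\in Y$ forces $V_i\subset V_i+Y$, is exactly the standard elementary argument for this theorem, and every step checks out (including, implicitly, that the hypothesis forces $r(M)\geq t$, since otherwise $|Y|<2^{r(M)-t+1}\leq 1$ is impossible as $0\in Y$). One small imprecision in your closing remark: the strictness of $2^{t-1}|Y|<2^{r(M)}$ does show this particular proof needs the hypothesis as stated, but sharpness of the theorem itself is a separate fact, witnessed by taking $E(M)$ to be the complement of a codimension-$(t-1)$ subspace (so $\chi(M)=t-1$ and $|M|=(1-2\cdot2^{-t})2^{r(M)}$), as noted in the paper's remark after Theorem~\ref{thm:main}.
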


\begin{cor}
\label{thm:hyperupper}
Fix $t\geq3$. Assuming Theorem \ref{thm:main} for $t-1$, say $M$ is a $PG(t-1,2)$-free matroid with $|M|>(1-3\cdot2^{-t})2^{r(M)}$ and $\chi(M)>t$. Set $Y=\mathbb{F}_2^{r(M)}\setminus E(M)$. For any hyperplane $H<\mathbb{F}_2^{r(M)}$, we have\[|Y\cap H|\geq 2^{r(M)-t}.\]
\end{cor}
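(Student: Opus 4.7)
The plan is to argue by contradiction and apply the Bose-Burton theorem (Theorem \ref{thm:bb}) to the restriction $M|_H$. Suppose $|Y\cap H|<2^{r(M)-t}$; then a complementary count gives
\[|E(M)\cap H|>|H|-2^{r(M)-t}=2^{r(M)-1}-2^{r(M)-t}=(1-2\cdot2^{-t})\cdot 2^{r(M)-1}.\]
To invoke Theorem \ref{thm:bb}, I need $E(M)\cap H$ to be a full-rank subset of $H$, i.e.\ to have rank $r(M)-1$.

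The first step is therefore to verify that $E(M)\cap H$ spans $H$. If it did not, it would be contained in a subspace of $\mathbb{F}_2^{r(M)}$ of codimension at least $2$, which has size at most $2^{r(M)-2}$. For $t\geq 3$ one checks directly that $2^{r(M)-1}-2^{r(M)-t}>2^{r(M)-2}$, so this spanning failure is incompatible with the lower bound above.

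With the rank confirmed, $M|_H$ is a matroid of rank $r(M)-1$ whose cardinality exceeds $(1-2\cdot 2^{-t})\cdot 2^{r(M|_H)}$. The Bose-Burton theorem then produces a copy of $PG(t-1,2)$ inside $E(M)\cap H$, and composing the associated linear injection with the inclusion $H\hookrightarrow\mathbb{F}_2^{r(M)}$ exhibits $PG(t-1,2)$ as a submatroid of $M$ itself, contradicting the $PG(t-1,2)$-freeness of $M$.

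The only delicate point is the rank computation, and it is exactly where the hypothesis $t\geq 3$ is used; once that is in hand, Bose-Burton does the rest, and none of the other hypotheses of the corollary (on $\chi(M)$ or the inductive assumption of Theorem \ref{thm:main}) are actually needed for this statement.
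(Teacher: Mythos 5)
Your proof is correct and follows the same route as the paper: complementary counting plus Bose-Burton. The paper's two-line proof silently assumes $E(M)\cap H$ spans $H$, so your rank verification is a worthwhile addition, and your observation that the $\chi(M)>t$ hypothesis and the inductive assumption are not actually used here is also accurate.
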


\begin{proof}
If not, $|E(M)\cap H|>(1-2\cdot 2^{-t})2^{r(M)-1}$. Then by Theorem \ref{thm:bb}, there is a copy of $PG(t-1,2)$ in $E(M)\cap H$.
\end{proof}

The above corollary and Proposition \ref{thm:hyper} serve to give a lower bound on $\frac1{|M|}\sum_{v\in E(M)}|Y\cap(Y+v)|$ using Fourier-analytic techniques. The following proof is based on one found in Tao and Vu \cite{tv}.

\begin{prop}
\label{thm:fourier}
Fix $t\geq3$. Assuming Theorem \ref{thm:main} for $t-1$, say $M$ is a $PG(t-1,2)$-free matroid with $|M|>(1-3\cdot2^{-t})2^{r(M)}$ and $\chi(M)>t$. Then letting $Y=\mathbb{F}_2^{r(M)}\setminus E(M)$, we have\[\frac1{|M|}\sum_{v\in E(M)}|Y\cap (Y+v)|> \frac23\frac{|Y|^2}{2^{r(M)}}.\]
\end{prop}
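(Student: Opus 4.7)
The plan is a direct Fourier-analytic calculation on $\mathbb{F}_2^{r(M)}$. Writing $n=r(M)$, let $f=1_Y$ and $g=1_{E(M)}$, and use the Fourier transform $\hat{h}(\xi)=\sum_x h(x)(-1)^{\xi\cdot x}$. Under this convention Parseval reads $\sum_x h_1(x)h_2(x)=2^{-n}\sum_\xi \hat{h}_1(\xi)\hat{h}_2(\xi)$, and the autocorrelation $(f*f)(v)=\sum_x f(x)f(x+v)=|Y\cap(Y+v)|$ has Fourier transform $\hat{f}(\xi)^2$. The quantity of interest therefore rewrites as
\[
\sum_{v\in E(M)}|Y\cap(Y+v)| \;=\; \sum_v g(v)(f*f)(v) \;=\; \frac{1}{2^n}\sum_\xi \hat{g}(\xi)\hat{f}(\xi)^2.
\]

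To extract the main term, I would use the pointwise identity $f+g\equiv 1$, which yields $\hat{f}(0)=|Y|$, $\hat{g}(0)=|M|$, and $\hat{g}(\xi)=-\hat{f}(\xi)$ for every $\xi\neq 0$. Separating the $\xi=0$ contribution gives
\[
\sum_{v\in E(M)}|Y\cap(Y+v)| \;=\; \frac{|M|\,|Y|^2}{2^n} \;-\; \frac{1}{2^n}\sum_{\xi\neq 0}\hat{f}(\xi)^3,
\]
so it remains to control the error sum from above.

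For each nonzero $\xi$, writing $H_\xi=\xi^\perp$, one has $\hat{f}(\xi)=2|Y\cap H_\xi|-|Y|$. Combining the upper bound $|Y\cap H_\xi|\leq|Y|-2^{n-t}$ from Proposition \ref{thm:hyper} with the complementary lower bound $|Y\cap H_\xi|\geq 2^{n-t}$ from Corollary \ref{thm:hyperupper} yields $|\hat{f}(\xi)|\leq|Y|-2^{n-t+1}$, and since $|Y|<3\cdot 2^{n-t}$ this quantity is strictly less than $|Y|/3$. Parseval also gives $\sum_{\xi\neq 0}\hat{f}(\xi)^2=2^n|Y|-|Y|^2=|Y|\,|M|$, so
\[
\left|\sum_{\xi\neq 0}\hat{f}(\xi)^3\right| \;\leq\; \max_{\xi\neq 0}|\hat{f}(\xi)|\cdot\sum_{\xi\neq 0}\hat{f}(\xi)^2 \;<\; \frac{|Y|}{3}\cdot|Y|\,|M| \;=\; \frac{|Y|^2|M|}{3}.
\]
Dividing the displayed identity by $|M|$ and substituting this estimate produces $\tfrac{1}{|M|}\sum_{v\in E(M)}|Y\cap(Y+v)|>\tfrac{|Y|^2}{2^n}-\tfrac{|Y|^2}{3\cdot 2^n}=\tfrac{2}{3}\tfrac{|Y|^2}{2^n}$, as desired.

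The main substantive point is the bound $|\hat{f}(\xi)|<|Y|/3$, which crucially requires both the upper bound on $|Y\cap H|$ from Proposition \ref{thm:hyper} and the Bose--Burton-based lower bound from Corollary \ref{thm:hyperupper}; neither alone would suffice. The constant $3$ in the density hypothesis $|Y|<3\cdot 2^{n-t}$ is precisely what converts the hyperplane estimates into the claimed factor of $2/3$, so the inequality is essentially tight given what is available, and everything else is routine bookkeeping with Parseval and the convolution theorem.
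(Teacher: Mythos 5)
Your argument is correct and is essentially the same Fourier-analytic computation as the paper's, modulo normalization and a cosmetic rearrangement. The paper evaluates $1_Y\ast1_Y\ast1_{E(M)}(0)$ and separates the trivial frequency, bounding the error by $\max_{\xi\neq0}|\hat{1}_{E(M)}(\xi)|\cdot\sum_{\xi\neq0}\hat{1}_Y(\xi)^2$ with the same ``Fourier bias $<\tfrac13$'' estimate that you derive (the paper phrases it as $\bigl||Y\cap H|-|Y\setminus H|\bigr|<\tfrac13|Y|$, which is exactly your $|\hat f(\xi)|<|Y|/3$). Your substitution $\hat g(\xi)=-\hat f(\xi)$ to rewrite the error term as $-\sum_{\xi\neq0}\hat f(\xi)^3$ is a small simplification but not a different method; the key inputs (Proposition \ref{thm:hyper} for the upper bound and Corollary \ref{thm:hyperupper} for the lower bound on $|Y\cap H_\xi|$, plus Parseval to control the $L^2$ mass) are identical in both.
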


\begin{proof}
Proposition \ref{thm:hyper} and Corollary \ref{thm:hyperupper} together imply that for any hyperplane $H<\mathbb{F}_2^{r(M)}$, we have $\bigl||Y\cap H|-|Y\setminus H|\bigr|<\frac13|Y|$. In the language of Fourier analysis, this says that the Fourier bias of $Y$ and of $E(M)$ are less than $\frac13\cdot\frac{|Y|}{2^{r(M)}}$. To complete the proof, we introduce some notions from Fourier analysis and present a slightly strengthened version of Lemma 4.13 of \cite{tv}.

Set $Z=\mathbb{F}_2^{r(M)}$. For $f:Z\to\mathbb{R}$, we define the Fourier transform $\hat{f}:{Z}\to\mathbb{R}$ by $\hat{f}(\xi)=\frac1{|Z|}\sum_{x\in Z}f(x)(-1)^{\xi\cdot x}$ where $\xi\cdot x$ is the standard dot product in $\mathbb{F}_2^{r(M)}$ and $(-1)^{\xi\cdot x}$ is the map $\mathbb F_2\to\mathbb R$ that is 1 when $\xi\cdot x$ is 0 and $-1$ when $\xi\cdot x$ is 1.

For a set $A\subseteq Z$ define the indicator function $1_A\colon Z\to\mathbb R$ by $1_A(x)$ is 1 when $x\in A$ and 0 otherwise. For $f,g\colon Z\to\mathbb R$, define their convolution $f\ast g\colon Z\to\mathbb{R}$ by $(f\ast g)(x)=\frac1{|Z|}\sum_{y\in Z}f(y)g(x-y)$.

Three easily-verified formulae from Fourier analysis are Parseval's identity,\[\frac1{|Z|}\sum_{x\in Z}f(x)^2=\sum_{\xi\in Z}\hat f(x)^2,\] the Fourier inversion formula,\[f(0)=\sum_{\xi\in Z}\hat f(\xi),\] and the convolution identity\[\widehat{f\ast g}(\xi)=\hat f(\xi)\hat g(\xi).\]Next, note that $\hat 1_A(0)=|A|/|Z|$. Furthermore, the left-hand side of Parseval's identity is also $|A|/|Z|$ when $f$ is the indicator function $1_A$.

Set $|Y|=\alpha2^{r(M)}$. To derive the desired result, note the following two facts. First, there is a bijection between vectors $\xi\in Z\setminus\{0\}$ and hyperplanes $H<\mathbb F_2^{r(M)}$ where $\xi$ is the non-zero vector in the orthogonal complement of $H$. The vector $\xi$ satisfies $(-1)^{\xi\cdot x}$ is 1 when $x\in H$ and $-1$ otherwise. Thus from the equation
\begin{align*}
\bigl||E(M)\cap H|-|E(M)\setminus H|\bigr|
&=\bigl|(2^{r(M)-1}-|Y\cap H|)-(2^{r(M)-1}-|Y\setminus H|)\bigr|\\
&=\bigl||Y\cap H|-|Y\setminus H|\bigr|\\
&<\frac13|Y|
\end{align*}for all hyperplanes $H$, we conclude that $|\hat 1_{E(M)}(\xi)|<\frac{Y}{3\cdot 2^{r(M)}}$ for all $\xi\in Z\setminus\{0\}$.

Second, the quantity we are interested in can be expressed as follows\[\sum_{v\in E(M)} |Y\cap(Y+v)|=|\{(y_1,y_2,v)\in Y\times Y\times E(M) : y_1+y_2+v=0\}|.\]  We can write the right-hand side of the above equation as $2^{2r(M)}\cdot(1_Y\ast1_Y\ast1_{E(M)})(0)$. Then we compute
\begin{align*}
(1_Y\ast1_Y\ast1_{E(M)})(0)&=\sum_{\xi\in{Z}}\hat{1}_Y(\xi)\hat{1}_Y(\xi)\hat{1}_{E(M)}(\xi)\\
&\geq\hat 1_Y(0)^2\hat 1_{E(M)}(0)-\left|\sum_{\xi\in{Z}\setminus\{0\}}\hat{1}_Y(\xi)^2\hat{1}_{E(M)}(\xi)\right|\\
&\geq\alpha^2(1-\alpha)-\sum_{\xi\in{Z}\setminus\{0\}}\hat{1}_Y(\xi)^2|\hat{1}_{E(M)}(\xi)|\\
&> \alpha^2(1-\alpha)-\frac13\alpha\sum_{\xi\in{Z}\setminus\{0\}}\hat{1}_Y(\xi)^2\\
&= \alpha^2(1-\alpha)-\frac13\alpha\sum_{\xi\in{Z}}\hat{1}_Y(\xi)^2+\frac13\alpha\hat{1}_Y(0)^2\\
&=\alpha^2(1-\alpha)-\frac13\alpha(\alpha-\alpha^2)\\
&=\frac23\alpha^2(1-\alpha).
\end{align*}
The first line follows from the Fourier inversion formula and the convolution identity. The second line follows by taking out the contribution from $\xi=0$. The fourth line follows since $|\hat 1_{E(M)}(\xi)|<\frac13\alpha$ for $\xi\neq 0$. The sixth line follows by Parseval's identity.

Since $|M|=(1-\alpha)2^{r(M)}$ and $|Y|=\alpha2^{r(M)}$, we conclude\[\frac1{|M|}\sum_{v\in E(M)}|Y\cap (Y+v)|>\frac{1}{(1-\alpha)2^{r(M)}}\frac23\alpha^2(1-\alpha)2^{2r(M)}=\frac23\frac{|Y|^2}{2^{r(M)}},\]as desired.
\end{proof}

Now for $t=3$, this lower bound on $\frac1{|M|}\sum_{v\in E(M)}|Y\cap(Y+v)|$ contradicts the upper bound on $|Y\cap(Y+v)|$ from Proposition \ref{thm:quot} $(ii)$.

\begin{cor}
There exist no $PG(2,2)$-free matroids $M$ with $|M|>(1-3\cdot2^{-3})2^{r(M)}$ and $\chi(M)>3$.
\end{cor}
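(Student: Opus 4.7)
The plan is to combine the lower bound from Proposition~\ref{thm:fourier} with the upper bound $|Y\cap(Y+v)| < 2^{r(M)-4}$ from Proposition~\ref{thm:quot}(ii), exactly as the paragraph preceding the corollary suggests. However, a direct comparison only yields $\alpha^2 < 3/32$ with $\alpha = |Y|/2^{r(M)}$, whereas Proposition~\ref{thm:hyper} together with Corollary~\ref{thm:hyperupper} only guarantees $|Y\cap H|,|Y\setminus H| \geq 2^{r(M)-3}$ for every hyperplane, hence $\alpha \geq 1/4$. Since $1/16 < 3/32$, this is not quite a contradiction, and the Fourier step needs a small sharpening.

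The sharpening is to replace the bias bound $|\hat{1}_Y(\xi)| < \alpha/3$ used in the proof of Proposition~\ref{thm:fourier} with a tighter one. For any nontrivial character $\xi$ with corresponding hyperplane $H$, writing $a = |Y\cap H|$ and $b = |Y\setminus H|$, the inequalities $a, b \geq 2^{r(M)-3}$ together with $a + b = |Y|$ force
\[
|a - b| = |Y| - 2\min(a,b) \leq |Y| - 2\cdot 2^{r(M)-3},
\]
so $|\hat{1}_Y(\xi)| \leq \alpha - 1/4$. Since $1_Y + 1_{E(M)}$ is constant, $\hat{1}_{E(M)}(\xi) = -\hat{1}_Y(\xi)$ for $\xi \neq 0$, so the same bound applies to $\hat{1}_{E(M)}$.

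Feeding this tighter bias into the Fourier-analytic identity from Proposition~\ref{thm:fourier} gives
\[
1_Y\ast 1_Y\ast 1_{E(M)}(0) \geq \alpha^2(1-\alpha) - (\alpha - 1/4)\alpha(1-\alpha) = \tfrac{1}{4}\alpha(1-\alpha),
\]
and therefore
\[
\frac{1}{|M|}\sum_{v\in E(M)} |Y\cap(Y+v)| \geq \frac{|Y|}{4} \geq 2^{r(M)-4},
\]
using $|Y| \geq 2^{r(M)-2}$ at the last step. This contradicts the strict upper bound $< 2^{r(M)-4}$ coming from Proposition~\ref{thm:quot}(ii).

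The main obstacle is realizing that the bias bound $< \alpha/3$ baked into Proposition~\ref{thm:fourier} is insufficient in the regime $\alpha \approx 1/4$ where a counterexample would have to live, and seeing that it can be improved to $\leq \alpha - 1/4$ directly from Propositions~\ref{thm:hyper} and~\ref{thm:hyperupper}. Once that observation is made, the contradiction is a one-line recomputation.
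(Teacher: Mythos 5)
Your proof is correct, but it takes a genuinely different route from the paper. The paper's proof of this corollary leaves Proposition~\ref{thm:fourier} untouched and instead invokes Theorem~\ref{thm:gs} to get the stronger density bound $|Y|/2^{r(M)}\geq\frac{11}{32}$: since $M$ is $PG(2,2)$-free with $\chi(M)\neq 2$, Theorem~\ref{thm:gs} forces $|M|\leq(1-\frac{11}{32})2^{r(M)}$, and then $\frac23\cdot(\frac{11}{32})^2=\frac{121}{1536}>\frac1{16}$ already yields the contradiction you observed was just out of reach with $\alpha\geq\frac14$ alone. Your approach goes the other way: rather than strengthening the lower bound on $\alpha$, you sharpen the Fourier bias from $<\alpha/3$ to $\leq\alpha-\frac14$ using Propositions~\ref{thm:hyper} and~\ref{thm:hyperupper}, which (since $\alpha<\frac38$) is indeed strictly tighter. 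This replaces the quadratic lower bound $\frac23\alpha^2$ by a linear one $\frac14\alpha$, and then the weaker $\alpha\geq\frac14$ closes the argument, with strictness coming from the $<$ in Proposition~\ref{thm:quot}(ii). Both proofs are valid; the paper's is shorter given that Theorem~\ref{thm:gs} is already in hand, while yours is self-contained modulo the hyperplane bounds and makes explicit how the bias improves with $\alpha$, which is a nice observation. You should note that Theorem~\ref{thm:gs} would have given the bound you were missing directly.
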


\begin{proof}
For the sake of contradiction, suppose $M$ is a counterexample. Set $Y=\mathbb{F}_2^{r(M)}\setminus E(M)$. Proposition \ref{thm:quot} $(ii)$ says that for all $v\in E(M)$, it is the case that $|Y\cap(Y+v)|<2^{r(M)-4}$ while Theorem \ref{thm:gs} gives us that $\frac{|Y|}{2^{r(M)}}\geq\frac{11}{32}$. With Proposition \ref{thm:fourier}, these bounds give\[\frac1{16}>\frac1{|M|}\sum_{v\in E(M)}\frac{|Y\cap(Y+v)|}{2^{r(M)}}>\frac23\frac{|Y|^2}{2^{2r(M)}}\geq\frac{121}{1536},\]a contradiction.
\end{proof}

This concludes the $t=3$ case. To prove the $t=4$ case, we need to improve the upper bound on $|Y\cap(Y+v)|$ given by Proposition \ref{thm:quot} $(ii)$. To do so, we use the doubling construction twice.

\begin{prop}
\label{thm:2thing}
Say $M$ is a $PG(3,2)$-free matroid with $|M|>(1-3\cdot2^{-4})2^{r(M)}$ and $\chi(M)>4$. Set $Y=\mathbb{F}_2^{r(M)}\setminus E(M)$. For any $v_1,v_2\in E(M)$ with $v_1+v_2\in E(M)$, we have\[|Y\cap (Y+v_1)|+|Y\cap (Y+v_2)|<\frac83|Y|-\frac{11}{24}2^{r(M)}.\]
\end{prop}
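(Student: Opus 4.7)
The plan is to apply the doubling construction (Definition \ref{defn:quot}) twice and then exploit Govaerts--Storme at $t=2$. Let $V = \{0, v_1, v_2, v_1+v_2\}$, a two-dimensional subspace with $V \setminus \{0\} \subset E(M)$, and consider $N = (M_{v_1})_{v_2}$. Directly from the definition, $x \in E(N)$ iff $x + V \subset E(M)$, so $E(N)^c = Y + V$. First I would check that $N$ is $PG(1,2)$-free: apply Proposition \ref{thm:quotp} with $t = 4$ to conclude that $M_{v_1}$ is $PG(2,2)$-free (using $v_1 \in E(M)$), then again with $t = 3$ applied to $M_{v_1}$ and $v_2$, noting that $v_2 \in E(M_{v_1})$ since both $v_2$ and $v_1+v_2$ lie in $E(M)$.

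The main substantive step is to pin down $\chi(N) = 2$. Since $|Y+V| \leq 4|Y| < (3/4)\cdot 2^{r(M)}$, we have $|N| > (1/4)\cdot 2^{r(M)}$, so Theorem \ref{thm:dt} gives $\chi(N) \in \{1,2\}$. I would rule out $\chi(N) = 1$ by mirroring the argument in Proposition \ref{thm:quot}: if $\chi(N) = 1$, then Proposition \ref{thm:quotx} applied to $M_{v_1}$ with $v_2$ produces a hyperplane $H \ni v_2$ with $|(Y \cup (Y+v_1)) \cap H| \geq 2^{r(M)-2}$. Case-split on whether $v_1 \in H$: if $v_1 \in H$, then $(Y+v_1) \cap H = (Y\cap H) + v_1$, forcing $|Y \cap H| \geq 2^{r(M)-3}$ and contradicting Corollary \ref{thm:hyperbounds}(i); if $v_1 \notin H$, then $H$ and $H + v_1$ partition $\mathbb{F}_2^{r(M)}$, so $|(Y\cup(Y+v_1))\cap H| \leq |Y \cap H| + |Y \cap (H+v_1)| = |Y|$, which contradicts $|Y| < (3/16)\cdot 2^{r(M)} < 2^{r(M)-2}$. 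Hence $\chi(N) = 2$, and by the contrapositive of Theorem \ref{thm:gs} for $t = 2$ applied to $N$, $|N| \leq (5/16)\cdot 2^{r(M)}$, equivalently $|Y + V| \geq (11/16)\cdot 2^{r(M)}$.

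The final step converts this density bound into the claimed inequality via inclusion-exclusion. Let $a = |Y\cap(Y+v_1)|$, $b = |Y\cap(Y+v_2)|$, $c = |Y\cap(Y+v_1+v_2)|$, and let $p, q$ denote the common value of the four triple intersections and the quadruple intersection of $Y, Y+v_1, Y+v_2, Y+v_1+v_2$ (all four triple intersections coincide by $V$-translation). Inclusion-exclusion gives $|Y+V| = 4|Y| - 2(a+b+c) + 4p - q$, so $2(a+b+c) - 4p + q \leq 4|Y| - (11/16)\cdot 2^{r(M)}$. The key combinatorial identity, proved by partitioning $\mathbb{F}_2^{r(M)}$ into cosets of $V$ and tallying contributions to $a,b,c,p,q$ from each coset $C$ as a function of $|Y \cap C| \in \{0,1,2,3,4\}$ (and, when $|Y\cap C| = 2$, the difference vector), is $(a+b) + 4c + 2q \geq 8p$, which rearranges to $\tfrac32(a+b) \leq 2(a+b+c) - 4p + q$. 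Chaining these inequalities yields $\tfrac32(a+b) \leq 4|Y| - \tfrac{11}{16}\cdot 2^{r(M)}$, which gives the claim after multiplying by $2/3$.

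The hard part will be the case analysis ruling out $\chi(N) = 1$, where the strict density hypothesis $|Y| < (3/16)\cdot 2^{r(M)}$ enters critically, and the bookkeeping for the coset identity $(a+b)+4c+2q \geq 8p$. The strictness of the final conclusion comes either from propagating the strict inequalities in the hypotheses through the chain, or by examining the sole equality case --- every $V$-coset has $|Y\cap C| \in \{0,1,4\}$ and $N$ is extremal in Govaerts--Storme --- and ruling it out using the rigidity this forces on $M$.
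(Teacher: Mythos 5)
Your proof is correct and follows essentially the same route as the paper's: apply the doubling construction twice to get $N = (M_{v_1})_{v_2}$, show it is $PG(1,2)$-free, rule out $\chi(N)=1$ to invoke the contrapositive of Theorem \ref{thm:gs} at $t=2$ and obtain $|Y+V| \geq \frac{11}{16}2^{r(M)}$, then finish with a coset-by-coset counting argument. The only differences are bookkeeping: the paper phrases the coset count via $c_i$ (the number of $V$-cosets meeting $Y$ in exactly $i$ points) and proves $|Y\cap(Y+v_1)|+|Y\cap(Y+v_2)| \leq \tfrac83(c_2+2c_3+3c_4)$, which is literally the same inequality as your $(a+b)+4c+2q\geq 8p$ once one notes $c_2+2c_3+3c_4 = |Y|-\tfrac14|Y+V| = \tfrac14\bigl(2(a+b+c)-4p+q\bigr)$; and for ruling out $\chi(N)=1$ the paper observes that any hyperplane disjoint from $E(N)$ must contain both $v_1$ and $v_2$ (since $E(N)$ is $V$-invariant and nonempty), so every $V$-coset of $H$ meets $Y$, giving $|Y\cap H|\geq\tfrac14|H|$ and contradicting Corollary \ref{thm:hyperbounds}(i) in one step rather than your two-case split on $v_1\in H$ (both work). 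One small note: you correctly flag that a literal chain of these estimates gives $\leq$ rather than $<$; the paper has the same gap (it writes a strict inequality for $c_2+2c_3+3c_4$ that only follows non-strictly), but this is harmless, as the downstream application in Section \ref{sec:34} already has strict inequalities coming from Proposition \ref{thm:fourier} and from $|Y|<\tfrac{3}{16}2^{r(M)}$, so the non-strict version of this proposition would suffice.
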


\begin{proof}
Consider the matroid $(M_{v_1})_{v_2}$. Since $v_2, v_1+v_2\in E(M)$, it is also the case that $v_2\in E(M_{v_1})$. Proposition \ref{thm:quotp} then implies that $(M_{v_1})_{v_2}$ is $PG(1,2)$-free. We know that $Y\cup (Y+v_1)=\mathbb F_2^{r(M)}\setminus E(M_{v_1})$, so applying this twice we see that \[Y\cup(Y+v_1)\cup(Y+v_2)\cup(Y+v_1+v_2)=\mathbb{F}_2^{r(M)}\setminus E((M_{v_1})_{v_2}).\] Therefore, by Theorem \ref{thm:gs} we know that \[|Y\cup(Y+v_1)\cup(Y+v_2)\cup(Y+v_1+v_2)|\geq\frac{11}{16}2^{r(M)}.\] If this was not true, there would be a hyperplane $H$ disjoint from $E((M_{v_1})_{v_2})$. For the same reason as in Proposition \ref{thm:quotx}, we have that $v_1,v_2\in H$, so this hyperplane then also satisfies $|Y\cap H|\geq\frac14|H|=2^{r(M)-3}$ which contradicts Corollary \ref{thm:hyperbounds} $(i)$.

Let $c_i$ be the number of cosets of $\{0,v_1,v_2,v_1+v_2\}$ that contain $i$ elements of $Y$. We have $|Y|=c_1+2c_2+3c_3+4c_4$. Say $|Y|=\alpha2^{r(M)}$ where $\alpha <\frac3{16}$. Furthermore, $|Y\cup(Y+v_1)\cup(Y+v_2)\cup(Y+v_1+v_2)|=4c_1+4c_2+4c_3+4c_4\geq\frac{11}{16}2^{r(M)}$. Together these imply $c_2+2c_3+3c_4<\left(\alpha-\frac{11}{64}\right)2^{r(M)}$.

We claim that \[|Y\cap (Y+v_1)|+|Y\cap (Y+v_2)|\leq\frac83(c_2+2c_3+3c_4),\]which is exactly what we want to show. To prove this, we calculate how much each coset of $\{0,v_1,v_2,v_1+v_2\}$ contributes to the left- and right-hand sides of this equation.

Suppose $G$ is a coset of $\{0,v_1,v_2,v_1+v_2\}$. Set $Y_G=Y\cap G$. We have four cases.

\textbf{Case 1:} $|Y_G|\leq 1$

Such a coset contributes 0 to $c_2+2c_3+3c_4$. Furthermore, clearly $|Y_G\cap(Y_G+v_1)|=|Y_G\cap(Y_G+v_2)|=0$ in this case.

\textbf{Case 2:} $|Y_G|=2$

Such a coset contributes 1 to $c_2+2c_3+3c_4$. Furthermore, exactly one of $Y_G\cap(Y_G+v_1)$, $Y_G\cap(Y_G+v_2)$, $Y_G\cap(Y_G+v_1+v_2)$ is equal to $Y_G$ and the other two are $\emptyset$. Thus $|Y_G\cap(Y_G+v_1)|+|Y_G\cap(Y_G+v_2)|\in\{0,2\}$.

\textbf{Case 3:} $|Y_G|=3$

Such a coset contributes 2 to $c_2+2c_3+3c_4$. Furthermore, it is easy to see that $|Y_G\cap(Y_G+v_1)|=|Y_G\cap(Y_G+v_2)|=2$, so $|Y_G\cap(Y_G+v_1)|+|Y_G\cap(Y_G+v_2)|=4$.

\textbf{Case 4:} $|Y_G|=4$

Such a coset contributes 3 to $c_2+2c_3+3c_4$. In this case, $Y_G\cap(Y_G+v_1)=Y_G\cap(Y_G+v_2)=Y_G$, so $|Y_G\cap(Y_G+v_1)|+|Y_G\cap(Y_G+v_2)|=8$.

Finally, it is easy to check that in each of the cases above, the contribution to $|Y\cap(Y+v_1)|+|Y\cap(Y+v_2)|$ is at most $8/3$ the contribution to $c_2+2c_3+3c_4$.
\end{proof}

\begin{thm}
There exist no $PG(3,2)$-free matroids $M$ with $|M|>(1-3\cdot2^{-4})2^{r(M)}$ and $\chi(M)>4$.
\end{thm}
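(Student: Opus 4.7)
The plan is to combine the Fourier-analytic lower bound from Proposition \ref{thm:fourier} with the pairwise upper bound from Proposition \ref{thm:2thing} via a double-counting and Cauchy--Schwarz argument. Write $N = 2^{r(M)}$, $Y = \mathbb{F}_2^{r(M)} \setminus E(M)$, $|Y| = \alpha N$, and $f(v) = |Y \cap (Y+v)|$. Since $\chi(M) > 4$ in particular implies $\chi(M) \neq 3$, Theorem \ref{thm:gs} with $t = 4$ forces $\alpha \geq \tfrac{11}{64}$; combined with the hypothesis $\alpha < \tfrac{3}{16}$, the parameter $\alpha$ lies in the narrow range $[\tfrac{11}{64}, \tfrac{3}{16})$.

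Next I would sum the inequality of Proposition \ref{thm:2thing} over all ordered pairs $(v_1, v_2) \in E(M)^2$ with $v_1 + v_2 \in E(M)$. The key observation is that for fixed $v_1 \in E(M)$ the number of admissible partners equals $|M_{v_1}| = (1-2\alpha)N + f(v_1)$ by Lemma \ref{thm:pie}. Setting $c = \tfrac{8\alpha}{3} - \tfrac{11}{24}$ and $\mu = \tfrac{1}{|M|}\sum_{v \in E(M)} f(v)$, and applying Cauchy--Schwarz in the form $\sum_{v_1 \in E(M)} f(v_1)^2 \geq |M|\mu^2$, the summed inequality reduces (after dividing by $|M|$) to
\begin{equation*}
2(1-2\alpha)N\mu + 2\mu^2 < cN\bigl[(1-2\alpha)N + \mu\bigr],
\end{equation*}
which factors as $(2\mu - cN)\bigl(\mu + (1-2\alpha)N\bigr) < 0$. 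Since the second factor is positive, one deduces $\mu < \tfrac{c}{2}N$.

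Combining with Proposition \ref{thm:fourier}'s lower bound $\mu > \tfrac{2}{3}\alpha^2 N$ yields $\tfrac{2}{3}\alpha^2 < \tfrac{4\alpha}{3} - \tfrac{11}{48}$, i.e., $32\alpha^2 - 64\alpha + 11 < 0$. This quadratic equals $\tfrac{1}{8}$ at $\alpha = \tfrac{3}{16}$ and is strictly decreasing on $[0, 1]$, so it stays positive throughout $[\tfrac{11}{64}, \tfrac{3}{16})$, giving the contradiction. The main obstacle is arranging the double-counting so that the resulting quadratic in $\mu$ factors cleanly; this works out precisely because $|M_{v_1}|$ is affine in $f(v_1)$ with slope $1$, which matches the quadratic term produced by Cauchy--Schwarz and delivers the clean factor $(2\mu - cN)$.
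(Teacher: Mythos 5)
Your proof is correct, and it takes a genuinely different route to the same key bound $\mu < \tfrac{c}{2}N$ (with $c = \tfrac{8\alpha}{3} - \tfrac{11}{24}$) that the paper establishes. The paper builds a graph on $E(M)$ where $v_1 \sim v_2$ iff $v_1 + v_2 \in E(M)$, observes that $\deg v = |M_v| > \tfrac{5}{8}2^{r(M)} > |E(M)|/2$, invokes Dirac's theorem to get a Hamiltonian cycle, extracts a perfect or near-perfect matching from alternating edges, and sums Proposition \ref{thm:2thing} over the matching; it must then handle the leftover unmatched element separately when $|M|$ is odd, which requires verifying $|M| \geq 27$. Your approach instead sums Proposition \ref{thm:2thing} over \emph{all} ordered pairs $(v_1,v_2)$ with $v_1, v_2, v_1+v_2 \in E(M)$, uses Lemma \ref{thm:pie} to write the number of partners of $v_1$ as the affine expression $|M_{v_1}| = (1-2\alpha)N + f(v_1)$, and lower-bounds $\sum_{v_1} f(v_1)^2$ by Cauchy--Schwarz. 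The fact that the slope of $|M_{v_1}|$ in $f(v_1)$ is exactly $1$ is what makes the resulting quadratic in $\mu$ factor as $(2\mu - cN)\bigl(\mu + (1-2\alpha)N\bigr)$, and since the second factor is positive, you isolate $\mu < \tfrac{c}{2}N$ directly. The factorization checks out, and the final quadratic $32\alpha^2 - 64\alpha + 11$ is indeed positive for $\alpha \leq \tfrac{3}{16}$ (it equals $\tfrac{1}{8}$ there and is decreasing on $[0,1]$). Your route avoids Dirac's theorem and the parity/cardinality case analysis entirely, and arrives at an identical contradiction from Proposition \ref{thm:fourier}; it is a cleaner averaging argument that buys robustness at the cost of no new ideas beyond Cauchy--Schwarz.
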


\begin{proof}
For the sake of contradiction, assume that $M$ is a counterexample. Set $Y=\mathbb{F}_2^{r(M)}\setminus E(M)$. We wish to find a contradiction between Proposition \ref{thm:fourier} and Proposition \ref{thm:2thing}. For $|Y|<\frac3{16}2^{r(M)}$, we can check that\[\frac23\frac{|Y|^2}{2^{2r(M)}}>\frac43\frac{|Y|}{2^{r(M)}}-\frac{11}{48}.\] It is sufficient to show that we can pair up the elements of $E(M)$ such that if $v_1,v_2$ is a pair, $v_1+v_2\in E(M)$. This is because then we have\[\frac43\frac{|Y|}{2^{r(M)}}-\frac{11}{48}>\frac1{|M|}\sum_{v\in E(M)}\frac{|Y\cap(Y+v)|}{2^{r(M)}}>\frac23\frac{|Y|^2}{2^{2r(M)}}.\]

In fact, if we can only pair up all but one of the elements of $E(M)$, we still find a contradiction since\[\frac1{|M|}\sum_{v\in E(M)}\frac{|Y\cap (Y+v)|}{2^{r(M)}}<\frac{|M|-1}{|M|}\left(\frac43\frac{|Y|}{2^{r(M)}}-\frac{11}{48}\right)+\frac1{|M|}\left(\frac1{32}\right),\]where the bound on the left-over element comes from Proposition \ref{thm:quot} $(ii)$. Then we can check that the inequality\[\frac23\frac{|Y|^2}{2^{2r(M)}}>\frac43\frac{|Y|}{2^{r(M)}}-\frac{11}{48}+\frac1{27}\cdot\frac1{32}\]still holds for $|Y|<\frac3{16}2^{r(M)}$. Since the inequality $4<\chi(M)\leq r(M)$ implies that $|M|>\frac{13}{16}2^{r(M)}>26$, this is sufficient.

To prove that we can pair up all or all but one elements of $E(M)$, we use the following graph-theoretic argument. Construct a graph whose vertices are elements of $E(M)$ and where two vertices are connected by an edge if their sum is also in $E(M)$. Note that a vertex $v$ has degree $|M_v|>\frac582^{r(M)}>|E(M)|/2$. By Dirac's theorem, $E(M)$ has a Hamiltonian cycle. Picking alternating edges from this cycle gives the desired matching.
\end{proof}

\section*{Acknowledgments}

This research was conducted at the University of Minnesota Duluth REU and was supported by NSF grant 1358659 and NSA grant H98230-13-1-0273. The author thanks Joe Gallian for suggesting the problem as well as Levent Alpoge, Noah Arbesfeld, and an anonymous referee for helpful comments on the manuscript.

\end{document}